\newtheorem{theorem}{Theorem}
\newtheorem{problem}[theorem]{Problem}
\newtheorem{corollary}[theorem]{Corollary}
\newtheorem{conjecture}[theorem]{Conjecture}
\newtheorem{lemma}[theorem]{Lemma}
\newtheorem{claim}{Claim}[theorem]
\newcommand{\ifun}{{\rm i}}
\newcommand{\andrasfai}{Andr\'asfai}
\newcommand*\oline[1]{%
  \,\vbox{%
    \hrule height 0.5pt
    \kern0.25ex
    \hbox{%
      \kern-0.1em
      \ifmmode#1\else\ensuremath{#1}\fi
      \kern0.1em
    }
  }
}
\newcommand{\Gcompl}{\oline{G}}
\newcommand{\ceil}[1]{\lceil #1 \rceil}
\tikzset{black vertex/.style={circle,draw,minimum size=1mm,inner sep=0pt,outer sep=2pt,fill=black, color=black}}
\title{Immersions of large cliques in graphs \\with independence number~2 \\and bounded maximum degree}
\author[1]{Fábio~Botler}
\author[1]{Cristina~G.~Fernandes}
\author[2]{Carla~N.~Lintzmayer}
\author[3]{Rui~A.~Lopes}
\author[4]{Suchismita~Mishra}
\author[3]{Bruno~L.~Netto}
\author[2]{Maycon~Sambinelli}
\affil[1]{\small Universidade de São Paulo, São Paulo, Brazil}
\affil[2]{\small Universidade Federal do ABC, Santo André, Brazil}
\affil[3]{\small Universidade Federal do Rio de Janeiro, Rio de Janeiro, Brazil}
\affil[4]{\small Universidad Andrés Bello, Santiago, Chile}
\date{}
\begin{document}

\maketitle

\begin{abstract}
  An immersion of a graph~\(H\) in a graph~\(G\) is a minimal subgraph~\(I\)
  of~\(G\) for which there is an injection \(\ifun \colon V(H) \to V(I)\) and
  a set of edge-disjoint paths \(\{P_e: e \in E(H)\}\) in~\(I\) such that
  the end vertices of~\(P_{uv}\) are precisely~\(\ifun(u)\) and~\(\ifun(v)\).
  The immersion analogue of Hadwiger Conjecture~(1943)\nocite{hadwiger1943klassifikation},
  posed by Lescure and Meyniel~(1985)\nocite{lescure41problem}, asks whether
  every graph~\(G\) contains an immersion of~\(K_{\chi(G)}\).
  Its restriction to graphs with independence number~2 has received some
  attention recently, and Vergara~(2017)\nocite{vergara2017complete} raised the weaker conjecture
  that every graph with independence number~2 has an immersion of~\(K_{\chi(G)}\).
  This implies that every graph with independence number~2 has an
  immersion of~\(K_{\ceil{n/2}}\).
  In this paper, we verify Vergara Conjecture for graphs with bounded 
  maximum degree.
  Specifically, we prove that if~\(G\) is a graph with independence 
  number~\(2\), maximum degree less than \(2n/3 - 1\) and clique covering number at most \(3\),
  then~\(G\) contains an immersion of~\(K_{\chi(G)}\) (and thus of~\(K_{\ceil{n/2}}\)).
  Using a result of Jin (1995), this implies that if~\(G\) is a graph with independence 
  number~\(2\) and maximum degree less than \(19n/29 - 1\),
  then~\(G\) contains an immersion of~\(K_{\chi(G)}\) (and thus of~\(K_{\ceil{n/2}}\)).
\end{abstract}

\section{Introduction}

In this paper, every graph is simple, that is, contains no loops or multiple edges.
Given a graph~\(G\), we denote by~\(\chi(G)\), \(\alpha(G)\), \(\delta(G)\)
and~\(\Delta(G)\) the usual chromatic number, independence number, and minimum
and maximum degree of \(G\), respectively.
We consider the problem of finding special subgraphs in dense graphs.
Specifically, we are interested in a problem related to the following 
conjecture posed by Hadwiger~\cite{hadwiger1943klassifikation}.

\begin{conjecture}[Hadwiger, 1943]
\label{conj:hadwiger}
  Every graph \(G\) contains \(K_{\chi(G)}\) as a minor.
\end{conjecture}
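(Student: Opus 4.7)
The statement is Hadwiger's Conjecture itself, which is a long-standing open problem; the plan below therefore sketches the natural strategy one would attempt, and indicates where it breaks down. The setup is induction on $k = \chi(G)$, passing to a minimal counterexample $G$, which may be assumed to be $k$-critical, contraction-critical, and of connectivity roughly $k$. For $k \le 4$ the result is elementary; for $k = 5, 6$, Wagner and Robertson--Seymour--Thomas reduced the conjecture to the Four Color Theorem, so the truly open regime is $k \ge 7$.

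The most direct inductive attempt is to locate a vertex $v$ whose neighborhood $G[N(v)]$ has chromatic number at least $k-1$; induction inside $N(v)$ would then produce a $K_{k-1}$-minor, and adjoining $v$ would yield the desired $K_k$-minor. This strategy collapses in general because $\chi(G[N(v)])$ can be far below $k-1$; the class $\alpha(G) = 2$ studied in the present paper is a striking illustration, since there every open neighborhood is a complement of a triangle-free graph whose clique number need not approach $\chi(G)$. A purely density-theoretic attempt is also available: by the Kostochka--Thomason theorem, average degree $\Omega(k\sqrt{\log k})$ already forces a $K_k$-minor, but a $k$-critical graph only comes equipped with minimum degree $k-1$, which yields at best a $K_t$-minor with $t = \Theta(k/\sqrt{\log k})$.

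The main obstacle is exactly this $\sqrt{\log k}$ factor between the density that high chromatic number guarantees and the density currently known to force a $K_k$-minor. Closing it would require extracting structural consequences of criticality that go beyond a minimum-degree bound --- for instance, using connectivity and contraction-criticality to locate a dense, highly connected substructure on which a Mader-type rooted-minor argument can be carried out, or invoking list-coloring to reduce to graphs with additional regularity. No general method for doing so is presently known, and the specializations pursued in this paper (restricting to $\alpha(G) = 2$ with bounded $\Delta(G)$, and weakening ``minor'' to ``immersion'' so that disjoint paths can replace disjoint connected subgraphs) should be read as concrete indications of how much extra hypothesis is currently required for the induction to actually close.
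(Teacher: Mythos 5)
You have correctly recognized that the statement is Hadwiger's Conjecture itself, which is a famous open problem; the paper states it only as motivating background and offers no proof, so there is no argument in the paper to compare yours against. Your survey of the standard obstructions --- the failure of the naive neighborhood-induction because $\chi(G[N(v)])$ need not be large, and the $\sqrt{\log k}$ gap between the Kostochka--Thomason density threshold and the minimum degree $k-1$ guaranteed by $k$-criticality --- is accurate, and your closing observation that the paper's restrictions ($\alpha(G)=2$, bounded $\Delta(G)$, immersions in place of minors) are exactly the kind of added hypothesis needed to make an inductive argument close is a fair reading of what the authors are doing. Nothing to correct; just be aware that when a reference item is labeled ``Conjecture,'' the expected response is to identify it as open rather than to attempt a proof.
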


Conjecture~\ref{conj:hadwiger} is still open, but it has been verified
in many cases, as for graphs with chromatic number at most~6~\cite{robertson1993hadwiger}.
An approach that turned out to be fruitful in exploring Conjecture~\ref{conj:hadwiger}
is to parametrize it by the independence number.
Indeed, a graph on~\(n\) vertices with independence number~\(\alpha\) has
chromatic number at least \(\lceil n/\alpha\rceil\), and Conjecture~\ref{conj:hadwiger}
would imply that such a graph has a minor of \(K_{\ceil{n/\alpha}}\).
In this direction, Duchet and Meyniel~\cite{duchet1982hadwiger} proved that
a graph on~\(n\) vertices with independence number~\(\alpha\) has a minor
of \(K_{\ceil{n/(2\alpha - 1)}}\); and, after several partial results 
(see~\cite{fox2010complete}), Balogh and Kostochka~\cite{balogh2011large} 
further improved this result by proving that every such graph has a minor
of \(K_{\ceil{n/((2-c)\alpha)}}\), where \(c \approx 0.0521\).

As pointed out by Quiroz~\cite{quirozpersonal}, special attention has been given
to the case \(\alpha = 2\), and important results explore graphs with
small clique covering number or small fractional clique covering number.
For example, 
suppose that \(G\) is a graph with an even number \(n\) of vertices for which \(\alpha(G) \leq 2\).
Blasiak~\cite[Theorem 1.3]{blasiak2007special} proved that \(G\) contains a minor of \(K_{n/2}\) if~\(G\) has fractional clique covering number less than \(3\) or if~\(G\) has clique covering number~\(3\);
and Chudnovsky and Seymour~\cite[Result 1.3]{chudnovsky2012packing}
proved that~\(G\) contains a minor of \(K_{n/2}\) if~\(G\) contains a clique of size at least \(n/4\)
and, consequently, \(G\) contains a minor of \(K_{n/2}\) if~\(G\) has clique covering number at most \(4\).

In this paper, we are interested in the following immersion analogue of
Conjecture~\ref{conj:hadwiger}, posed by Lescure and Meyniel~\cite[Problem 2]{lescure41problem}.
An \emph{immersion} of a graph~\(H\) in a graph~\(G\) is a minimal subgraph~\(I\) 
of~\(G\) for which there is an injection \(\ifun \colon V(H) \to V(I)\) and 
a set of edge-disjoint paths \(\{P_e : e \in E(H)\}\) in~\(I\) such that 
the end vertices of~\(P_{uv}\) are precisely~\(\ifun(u)\) and~\(\ifun(v)\).
The vertices of~\(G\) in the image of \(\ifun\) are called the \emph{branch vertices} 
of the immersion. 
Moreover, we say that such an immersion is \emph{strong} if the internal
vertices of the paths~\(P_e\) are not branch vertices.

\begin{problem}[Lescure--Meyniel, 1985]
\label{problem:LescureMeyniel}
  Does every graph \(G\) contain a strong immersion of~\(K_{\chi(G)}\)?
\end{problem}

The weaker version of Problem~\ref{problem:LescureMeyniel} for (not strong) 
immersions was posed as a conjecture by Abu-Khzam and Langston~\cite{abu2003graph}.
Similarly to Conjecture~\ref{conj:hadwiger}, Problem~\ref{problem:LescureMeyniel}
and its weakening received special attention in the case of
graphs with independence number~2.
In particular, in 2017 Vergara~\cite[Conjecture 2]{vergara2017complete} posed the following 
restriction. 

\begin{conjecture}[Vergara, 2017]
\label{conj:vergaraChi}
  Every graph \(G\) with independence number~$2$
  contains an immersion of \(K_{\chi(G)}\).
\end{conjecture}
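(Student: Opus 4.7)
The plan is to exploit the equivalence $\alpha(G)=2 \iff \overline{G}$ is triangle-free. Writing $k=\chi(G)$, every color class of $G$ is a clique in $\overline{G}$ and so has size at most $2$, which yields the identity $\chi(G)=n-\mu(\overline{G})$. The first step is to fix a maximum matching $M$ of $\overline{G}$, designate one endpoint of every $M$-edge together with every unmatched vertex as branch vertices to form a set $B$ with $|B|=k$, and call the remaining $\mu(\overline{G})$ vertices \emph{helpers}, collected in a set $S$; each helper $s\in S$ has a unique partner $b(s)\in B$ which is its non-neighbor in $G$.

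Next I would construct the $\binom{k}{2}$ paths of the immersion. For each pair $\{x,y\}\subseteq B$ with $xy\in E(G)$, the path is simply the edge $xy$. For each remaining pair, which is an edge of $\overline{G}$, the path must pass through $S$. The key local fact, inherited from triangle-freeness of $\overline{G}$, is that the common non-neighbors in $G$ of any two branch vertices $x,y$ form a clique in $\overline{G}$ and so have size at most $2$; therefore all but at most two helpers are $G$-adjacent to both $x$ and $y$, and a length-$2$ path $xsy$ is generically available. Where this fails I would fall back on a length-$3$ path $xs_1 s_2 y$, relying on the density of $G[S]$ which again follows from triangle-freeness of $\overline{G}$.

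The substantive step is packing these paths edge-disjointly. I would encode the task in a bipartite auxiliary graph whose left vertices are the helpers, whose right vertices are the pairs in $B$ still to be routed, and whose edges record which helper can serve which pair via a length-$2$ detour using two of its incident $G$-edges. A Hall-type defect argument, or a fractional LP duality argument, would then try to establish feasibility, exploiting both the essentially global availability of helpers for each pair (from the size-$\leq 2$ bound above) and the degree-sum estimate $d_G(s)+d_G(b(s))\geq n-2$, which follows from triangle-freeness of $\overline{G}$ and lets capacity be transferred between a helper and its matched branch vertex. When local congestion appears, I would try to resolve it by an $M$-alternating exchange that swaps a branch vertex with its partner $b(s)$, rebalancing the demand.

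The main obstacle, and where I expect the argument to resist a routine completion, is controlling a branch vertex $x$ that is the endpoint of many $\overline{G}$-edges inside $B$: each path $P_{xy}$ leaving $x$ consumes a distinct edge at $x$, so the demand at $x$ can approach $|B|-1$ and absorb almost all of $d_G(x)$. A clean way to avoid this would be to choose the initial branch set $B$ so that the missing edges inside $B$ are globally balanced, which is plausible from the Gallai--Edmonds decomposition of $\overline{G}$ but is not automatic from an arbitrary maximum matching. Closing this gap uniformly, without any auxiliary assumption such as a bound on $\Delta(G)$ or on the clique cover number of $G$, is the crux of Vergara's conjecture, and it is the step where my plan would require a genuinely new idea.
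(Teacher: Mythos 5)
The statement in question is Conjecture~\ref{conj:vergaraChi}, which the paper \emph{does not prove}: it remains an open conjecture, and the paper only establishes restricted versions of it (Theorems~\ref{thm:mainChi} and~\ref{thm:mainCliqueCovering}, under a maximum-degree or clique-cover hypothesis). Your closing paragraph, which identifies the congestion at high-codegree branch vertices as the obstruction and concludes that a new idea is needed absent such hypotheses, is therefore the correct assessment rather than a flaw in your write-up.

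It is worth noting that the skeleton of your plan is genuinely close to the paper's restricted proof. Your observation that $\chi(G)=n-\mu(\Gcompl)$ is correct and is the same style of counting the paper uses (via Gallai's theorem) to show that the branch set is large enough. Your length-$3$ fallback $x\,s_1\,s_2\,y$ is precisely the paper's construction: in Theorem~\ref{thm:graphs_with_special_clique_coloring} they take a $3$-clique coloring $\{D_1,D_2,D_3\}$ with $D_1$ a maximum clique, set $B=D_2\cup D_3$, use Hall's theorem on $\Gcompl[D_i,D_1]$ to assign to each $u\in B$ a representative $r_u\in D_1$ non-adjacent to $u$ in $G$, and route each missing edge $uv$ of $G[B]$ through $\langle u,r_v,r_u,v\rangle$. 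Where your plan stalls, the paper brings two extra structural ingredients that do not exist for an arbitrary graph with $\alpha=2$: first, the helper set $D_1$ is a clique, so the internal edge $r_ur_v$ of each detour is automatically present; and second, $G[D_2\cup D_3]$ has no induced $C_4$, which is exactly what rules out the collisions $r_ur_v=r_{u'}r_{v'}$ your congestion worry identifies. Both ingredients are bought by the bounded-degree hypothesis through the homomorphism of $\Gcompl$ into an Andr\'asfai graph (Lemma~\ref{lemma:andrasfai_blowup_coloring}); without such a hypothesis the conjecture is open, consistent with your conclusion. So there is no error in your proposal, but it does not, and cannot as written, prove the stated conjecture, which the paper also does not prove.
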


Observe that if a graph~\(G\) on~\(n\) vertices with independence
number~2 contains an immersion of \(K_{\chi(G)}\), then it contains 
an immersion of \(K_{\ceil{n/2}}\).
In fact, Vergara~\cite[Theorem~1.5]{vergara2017complete} proved that Conjecture~\ref{conj:vergaraChi}
is equivalent to the following (see~\cite[Conjecture~3]{vergara2017complete}).

\begin{conjecture}[Vergara, 2017]
\label{conj:vergaraN}
  Every graph on~\(n\) vertices with independence number~$2$
  contains an immersion of \(K_{\ceil{n/2}}\).
\end{conjecture}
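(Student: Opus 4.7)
The plan is to apply Vergara's equivalence (\cite[Theorem~1.5]{vergara2017complete}, cited immediately above) to reduce Conjecture~\ref{conj:vergaraN} to Conjecture~\ref{conj:vergaraChi}, so that the target immersion size becomes $\chi(G)$ instead of $\ceil{n/2}$. This reformulation is convenient because $\chi(G)$ interacts naturally with the clique-cover structure that $\alpha(G)=2$ imposes on $G$, whereas $\ceil{n/2}$ does not; in particular it lets one argue on a colour class at a time rather than on an arbitrary half of $V(G)$.

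The structural starting point is that $\alpha(G)\le 2$ is equivalent to $\Gcompl$ being triangle-free. I would fix a maximum matching $M=\{u_1v_1,\dots,u_kv_k\}$ in $\Gcompl$ and let $U$ be the set of $M$-unsaturated vertices; then $U$ is a clique of $G$ by maximality of $M$, and triangle-freeness of $\Gcompl$ constrains the remaining non-edges of $G$ to sit between matched pairs in a bipartite-like pattern. Take as candidate branch set $B:=U\cup\{u_1,\dots,u_k\}$, of size $n-k$. Using swaps along $M$-alternating paths in $\Gcompl$ (Gallai--Edmonds style), I would fine-tune the choice of representative $u_i\in\{u_i,v_i\}$ so that $B$ is aligned with a largest colour class of an optimal colouring of $G$, thereby relating $|B|$ to $\chi(G)$.

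With $B$ fixed, for every pair $b,b'\in B$ I would choose a path $P_{bb'}$ as follows: if $bb'\in E(G)$ (the generic case, since $B$ is close to a clique in $G$), use the direct edge; otherwise $b$ and $b'$ are non-adjacent, and $\alpha(G)=2$ forces them to share common neighbours inside the partner set $\{v_1,\dots,v_k\}$, so $P_{bb'}$ can be taken as a length-two path through any such common neighbour. The crux is realising all these paths edge-disjointly at once, which I would encode as an integer flow problem on the auxiliary bipartite graph whose sides are the ``conflict pairs'' $\{b,b'\}$ (those with $bb'\notin E(G)$) and the partner vertices $v_i$, and attack via edge-connectivity estimates in the spirit of Blasiak~\cite{blasiak2007special} and Chudnovsky--Seymour~\cite{chudnovsky2012packing} (already invoked here for the minor analogue).

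The main obstacle, and the reason Conjecture~\ref{conj:vergaraN} is still open in general, is precisely this global edge-disjoint routing: a greedy algorithm collides because many conflict pairs can compete for the same small pool of partner vertices, and heavy concentrations of demand at a single $v_i$ are hard to rule out without additional structure. The strategy adopted in the present paper sidesteps this obstacle by imposing a bound on the clique covering number and on the maximum degree, which together cap the local demand at any single vertex and make the capacity estimates go through; producing an unconditional routing, with no degree or clique-cover hypothesis, is the technical heart of what would be required to settle the full conjecture.
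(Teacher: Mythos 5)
This statement is a conjecture, not a theorem of the paper: the paper neither proves Conjecture~\ref{conj:vergaraN} nor claims to, and you correctly acknowledge this in your closing paragraph. There is therefore no paper proof to compare against; what is fair to compare is your sketch against the paper's approach to its partial results (Theorems~\ref{thm:mainCliqueCovering} and~\ref{thm:graphs_with_special_clique_coloring}).

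Your outline shares the paper's two structural pillars --- working in the triangle-free complement, and using a Hall-type matching to route missing adjacencies through ``representatives'' --- but the technical details diverge. A nice feature of your setup, worth making explicit, is that with $k=\nu(\Gcompl)$ the branch set $B=U\cup\{u_1,\dots,u_k\}$ has size $n-k=\chi(G)$ automatically (optimal colourings of $G$ correspond to maximum matchings of $\Gcompl$), so the ``fine-tuning'' you describe is only needed for the routing, not the count. The routing itself, however, is where you and the paper part ways and where your sketch has gaps. You propose length-$2$ detours $b\!-\!v_i\!-\!b'$ through common neighbours in the partner set, and frame the edge-disjointness problem as an integer flow; but each edge $bv_i$ can be used only once, and without a degree or clique-cover hypothesis there is no reason the resulting demand graph satisfies a Hall/Menger condition --- this is exactly the obstruction you name, so you are diagnosing correctly, but the sketch does not get past it. The paper avoids this by a different device: a special $3$-clique-cover $\{D_1,D_2,D_3\}$ induced by a homomorphism of $\Gcompl$ to an Andrásfai graph, and length-$3$ paths $\langle u, r_v, r_u, v\rangle$ whose middle edge lies inside the clique $D_1$; the two-sided representative scheme plus the no-induced-$C_4$ property of $G[D_2\cup D_3]$ yield edge-disjointness directly, with no flow argument. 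Finally, your claim that maximality of $M$ plus triangle-freeness forces the non-edges of $G$ inside $B$ into a ``bipartite-like pattern'' is not justified as stated: the chosen endpoints $u_1,\dots,u_k$ can trace long alternating paths or even cycles in $\Gcompl$, so $B$ need not be close to a clique without further argument.
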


In this direction, Vergara~\cite[Theorem~1.6]{vergara2017complete} proved that every graph on~\(n\) vertices
with independence number~2 has a strong immersion of \(K_{\ceil{n/3}}\).
This result was improved by Gauthier, Le, and Wollan~\cite[Theorem 1.7]{gauthier2019forcing},
who proved that every such graph contains a strong immersion of \(K_{2\lfloor n/5\rfloor}\).

\begin{theorem}[Gauthier--Le--Wollan, 2019]
\label{thm:gauthier}
  Every graph on~\(n\) vertices with independence number~$2$
  contains a strong immersion of \(K_{2\lfloor n/5\rfloor}\).
\end{theorem}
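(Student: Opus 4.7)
The plan is to exploit the constraint $\alpha(G)=2$ — equivalent to $\bar G$ being triangle-free — and build the strong immersion around a maximum matching of $\bar G$ together with the clique of unmatched vertices. I would begin by fixing a maximum matching $M=\{a_ib_i:1\le i\le k\}$ in $\bar G$ and letting $C=V(G)\setminus V(M)$, which is a clique in $G$ (being independent in $\bar G$). If $|C|\ge 2\lfloor n/5\rfloor$, the conclusion follows immediately by taking $2\lfloor n/5\rfloor$ vertices of $C$ as branch vertices with single-edge paths; otherwise $|C|<2\lfloor n/5\rfloor$, hence $k>3n/10$, and I would form the branch set $B\supseteq C$ of size exactly $2\lfloor n/5\rfloor$ by choosing one endpoint (say $a_i$) from each of $2\lfloor n/5\rfloor-|C|$ matching pairs. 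The helper set $R=V(G)\setminus B$ then consists of the mates of the chosen pairs together with both endpoints of the untouched pairs, and $|R|\ge 3n/5$.

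For the routing, I would use two consequences of the triangle-freeness of $\bar G$. First, for any two non-adjacent branch vertices $u,v\in B$, no third vertex is $\bar G$-adjacent to both (else $\{u,v,w\}$ would form a triangle in $\bar G$), so every vertex in $V(G)\setminus\{u,v\}$ is a $G$-neighbor of at least one of $u,v$. Second, in any matching pair $\{a_j,b_j\}\subseteq R$ each of $a_j,b_j$ belongs to $N_G(u)\cup N_G(v)$, so there is always a short $u$--$v$ route through $R$: a length-$2$ path $u\,w\,v$ through a common $G$-neighbor $w\in R$ when one exists, and otherwise a length-$3$ path $u\,a_j\,w\,v$ assembled from two helpers. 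The maximality of $M$ gives additional structure: no $\bar G$-augmenting path from $C$ can exist, which constrains the $\bar G$-edges from $C$ into $V(M)$ and yields many $G$-edges usable for routing.

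The main technical obstacle — and where I expect the real work to lie — is arranging all these paths to be simultaneously pairwise edge-disjoint and internally disjoint from $B$ (the \emph{strong} requirement). I would formulate this as a Hall- or flow-type assignment problem: to each non-edge of $G[B]$ associate a palette of admissible routes through $R$, and find a consistent selection. The vertex budget is comfortable since $|R|\ge 3n/5$, but each helper vertex and each edge incident to a branch vertex may be used only a bounded number of times, so one must argue that the local constraints can be met globally. Pushing this bookkeeping tight enough to achieve the precise bound $2\lfloor n/5\rfloor$ — rather than a weaker $cn$ — should reduce to a careful extremal analysis of the triangle-free complement combined with the augmenting-path structure of $M$.
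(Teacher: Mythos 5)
Your proposal is not a complete proof: you explicitly defer the key step. You set up a branch set $B$ of size $2\lfloor n/5\rfloor$ built from unmatched vertices of $\overline{G}$ plus one endpoint from enough matching pairs, observe that each non-edge of $G[B]$ can be routed through $R$ via a path of length $2$ or $3$, and then state that the real work is ``arranging all these paths to be simultaneously pairwise edge-disjoint and internally disjoint from $B$,'' which you propose to handle by a ``Hall- or flow-type assignment problem'' that you expect ``should reduce to a careful extremal analysis.'' None of that analysis is supplied. Since the number of non-edges of $G[B]$ can be $\Theta(n^2)$ and $|R| = \Theta(n)$, each helper vertex must absorb $\Theta(n)$ of the routes; showing the demands are consistent is exactly the heart of the theorem, and no argument is given for why the local degree budgets (or the augmenting-path structure of $M$, which you only allude to) can be met. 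Note also that the single-non-edge routing observation has a hole: when $u,v\in B$ have no common $G$-neighbor in $R$, the sets $R_u=R\cap N_G(u)$ and $R_v=R\cap N_G(v)$ partition $R$, and a length-$3$ path requires a $G$-edge between $R_u$ and $R_v$ --- but triangle-freeness of $\overline{G}$ does not forbid $\overline{G}[R_u,R_v]$ from being complete bipartite, so even the existence of a single route is not immediate and would itself need justification.

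Beyond the gap, your route is genuinely different from the paper's. The paper proceeds by induction on $n+|E(G)|$: it reduces to a minimal graph with $\alpha=2$, extracts an induced $C_5$ (Lemma~\ref{lemma:induced-C5}), applies the induction hypothesis to $G-V(C)$ to get an immersion $K'$ of $K_{2t-2}$, and then repairs only two new branch vertices $v_1,v_3$ of the $C_5$. The point is that the repair is tiny and local: the remaining three $C_5$-vertices $v_2,v_4,v_5$ serve as a small pool of common neighbours, and the counting (Claim~\ref{claim:good-subsets}) only has to produce disjoint helper sets $Y_1,Y_3$ matching the missing adjacencies $X_1,X_3$ of two vertices, not of $\Theta(n)$ vertices. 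Your matching-based construction is instead a direct one-shot argument in the spirit of Vergara's proof of the weaker $K_{\lceil n/3\rceil}$ bound; that approach is known to hit the $n/3$ barrier precisely because of the global edge-disjointness bookkeeping you leave unresolved, and you give no mechanism for pushing it to $2n/5$.
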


In 2021, Quiroz~\cite{quiroz2021clique} verified 
Conjecture~\ref{conj:vergaraN} for graphs with independence number \(2\) and special forbidden subgraphs.
In 2024, Botler, Jiménez, Lintzmayer, Pastine, Quiroz, and 
Sambinelli~\cite[Theorem 3]{botler2024biclique} proved that every such graph
contains an immersion of every complete bipartite graph with~\(\ceil{n/2}\)
vertices (see also~\cite{chen4882255simple}).
The main strategy of Botler et al.\ is to partition the vertex set 
of the given graph into five parts, say \(V_1,\ldots, V_5\), and to show that
they are somehow ``cyclically connected'', that is, the vertices of \(V_i\) 
are adjacent to many vertices of \(V_{i+1}\) for \(i=1,\ldots, 5\), where $V_6=V_1$. 
A special case of this partition is when \(G\) contains a spanning
complete-blow-up of~\(C_5\) (the graph obtained from~\(C_5\) by replacing 
each vertex with a clique and by replacing each edge with a complete
bipartite graph).
In this case, Quiroz~\cite[Lemma 2.2]{quiroz2021clique} also proved 
the existence of an immersion of~\(K_{\chi(G)}\), and hence of~\(K_{\ceil{n/2}}\).

One can expect that vertices of high degree help when looking for
large clique immersions.
For example, Vergara's proof of the 
existence of \(K_{\ceil{n/3}}\) immersions in graphs with 
independence number~2 starts by observing that a counterexample 
must have minimum degree at least \(\lfloor 2n/3\rfloor\) and, analogously, 
a step of the proof of Theorem~\ref{thm:gauthier} is to prove that
in a counterexample every vertex contained in an induced \(C_5\) has degree at least \(3n/5\).
Similarly, it is not hard to prove that any minimum counterexample
to Conjecture~\ref{conj:vergaraN} has no pair of nonadjacent vertices
with at least \(\ceil{n/2} - 2\) common neighbors.
In this paper, we consider graphs without vertices of large degree.
Specifically, we answer Problem~\ref{problem:LescureMeyniel} positively for graphs with independence number 2 and maximum degree bounded as~follows.

\begin{theorem}
\label{thm:mainChi}
  Let~\(G\) be a graph on~\(n\geq 11\) vertices with independence number~$2$. If~\({\Delta(G) < 19n/29 - 1}\), then \(G\) contains a strong immersion of \(K_{\chi(G)}\).
\end{theorem}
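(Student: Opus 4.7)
The plan is to prove the theorem in two steps, mirroring the two claims made in the abstract. First I would establish the main structural result: if $G$ has $\alpha(G)=2$, $\Delta(G) < 2n/3 - 1$, and clique covering number at most $3$, then $G$ contains a strong immersion of $K_{\chi(G)}$. Second, I would upgrade the degree bound from $2n/3-1$ to $19n/29-1$ by invoking Jin's 1995 theorem on the chromatic number of dense triangle-free graphs.

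The upgrade step is the easier one. Observe that $\alpha(G)=2$ is equivalent to $\Gcompl$ being triangle-free, and the clique covering number of $G$ equals $\chi(\Gcompl)$. Since $\Delta(G) < 19n/29 - 1$, we have $\delta(\Gcompl) = n-1-\Delta(G) > 10n/29$. Jin's theorem asserts that any triangle-free graph on $n$ vertices with minimum degree greater than $10n/29$ has chromatic number at most $3$, so the clique covering number of $G$ is at most $3$. Since $19n/29 < 2n/3$, the hypotheses of the structural result are all satisfied, and the theorem follows.

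For the structural result, let $V(G) = A \cup B \cup C$ be a clique cover of size at most $3$. If the clique covering number is $1$ then $G$ is complete and the conclusion is immediate; if it is $2$, standard arguments for two-clique structures handle it. So the substantive case is when we have three cliques $A$, $B$, $C$. Two structural facts drive the construction. First, since $\Gcompl$ is triangle-free and tripartite with parts $A,B,C$, any three vertices chosen one per part must span at least one edge of $G$. Second, the bound $\Delta(G) < 2n/3 - 1$ gives each vertex at least $n/3+1$ non-neighbors, which forces balance among $|A|,|B|,|C|$ and limits how many cross-clique non-edges any single vertex can contribute. I would then fix a proper $\chi(G)$-coloring whose color classes of size two correspond to a maximum matching $M$ in $\Gcompl$, and take one branch vertex from each color class.

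The main obstacle is arranging the pairwise edge-disjoint paths to obtain a \emph{strong} immersion. For two branch vertices inside the same clique the direct edge suffices; for two branch vertices in different cliques we use the direct edge when it exists, and otherwise must route through a non-branch internal vertex. The key technical step will be showing, by a Hall/matching-type argument, that the non-adjacent branch-vertex pairs can be simultaneously routed through distinct non-branch vertices while remaining edge-disjoint: each branch vertex has many non-neighbors in the other two cliques (by the $\Delta$ bound) but also retains enough neighbors there to supply rerouting hubs, and the no-cross-triangle property of $\Gcompl$ provides at least one usable intermediate vertex for each missing pair. A short case analysis split by how $M$ distributes across the three pairs of cliques $(A,B)$, $(B,C)$, $(A,C)$ should close the argument, with the condition $n \geq 11$ absorbing the small residual configurations.
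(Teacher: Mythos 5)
Your reduction to the degree bound \(2n/3-1\) via Jin's theorem is exactly the paper's first step: since \(\Gcompl\) is triangle-free with \(\delta(\Gcompl) = n-1-\Delta(G) > 10n/29\), Jin gives \(\chi(\Gcompl)\le 3\), i.e.\ clique covering number at most~\(3\), and \(19n/29 < 2n/3\) closes that reduction. That part is fine.

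The structural half has a genuine gap. You treat the \(3\)-clique cover \(\{A,B,C\}\) as a black box and hope that a Hall/matching argument plus triangle-freeness of \(\Gcompl\) will let you route the missing edges through distinct non-branch vertices edge-disjointly. This is not enough. The paper does \emph{not} work with an arbitrary \(3\)-clique cover; it first applies the Chen--Jin--Koh theorem (triangle-free, \(\delta > n/3\), \(3\)-chromatic \(\Rightarrow\) homomorphic to an Andr\'asfai graph \(\Gamma_d\)) to extract a much finer piece of structure: a special \(3\)-coloring \(\{D_1,D_2,D_3\}\) of \(\Gcompl\) in which \(D_1\) is a maximum independent set of the Andr\'asfai blow-up and, crucially, \(\Gcompl[D_2\cup D_3]\) contains \emph{no induced \(C_4\)} (Lemmas on Andr\'asfai colorings). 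The branch vertices are then all of \(D_2\cup D_3\) (not one per color class), every rerouting path is of the specific form \(\langle u, r_v, r_u, v\rangle\) with \(r_u,r_v\in D_1\) obtained from Hall's theorem applied to \(\Gcompl[D_i,D_1]\), and the \emph{only} reason the middle edges \(r_ur_v\) never collide is precisely the no-induced-\(C_4\) property: a collision \(r_ur_v = r_{v'}r_{u'}\) forces \(uv',u'v\in E'\) and hence a \(C_4\) in \(\Gcompl[D_2\cup D_3]\). Without invoking the Andr\'asfai structure you have no analogue of this \(C_4\)-freeness, and the generic ``each missing pair has a common neighbor in the third clique'' observation does not control edge reuse across different paths.

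You also have no mechanism to guarantee enough branch vertices. Taking one vertex per color class gives \(\chi(G)\) branch vertices by fiat, but then they are spread over all three cliques and the routing becomes a three-way problem with no obvious decomposition. The paper instead takes \(D_2\cup D_3\) as branch vertices and must then argue \(|D_2\cup D_3|\ge\chi(G)\); this is done by a minimum-counterexample argument showing \(G\) is \(\chi(G)\)-critical with connected complement, applying Gallai's bound \(|V(G)|\ge 2\chi(G)-1\), and combining with \(|D_1|\le\chi(G)-1\). That counting step is entirely absent from your sketch and is not a triviality one can wave away with ``standard two-clique arguments'' or ``small residual configurations.'' In short: the missing ideas are (i) passing from a bare \(3\)-clique cover to the Andr\'asfai homomorphism, (ii) the no-induced-\(C_4\) coloring that makes the representative paths pairwise edge-disjoint, and (iii) the Gallai-criticality count ensuring \(|D_2\cup D_3|\ge\chi(G)\).
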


In fact, Theorem~\ref{thm:mainChi} is a consequence of the following result
and a result of Jin~\cite{jin1995triangle} stated as Theorem~\ref{thm:jin} ahead.
\begin{theorem}\label{thm:mainCliqueCovering}
  Let~\(G\) be a graph on~\(n\) vertices with independence number~\(2\)
  and clique covering number at most \(3\).
  If \(\Delta(G) < 2n/3 - 1\), 
  then \(G\) contains a strong immersion of~\(K_{\chi(G)}\).    
\end{theorem}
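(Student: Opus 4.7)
My plan is to construct a strong immersion of $K_{\chi(G)}$ directly: choose branch vertices as a transversal of a maximum matching in the complement of $G$, and route the remaining non-edges through length-two paths. First I would set up notation: let $V_1, V_2, V_3$ be three cliques covering $V(G)$. Since every vertex $v \in V_i$ has $\deg_G(v) \ge |V_i| - 1$, the hypothesis $\Delta(G) < 2n/3 - 1$ forces $|V_i| \le 2n/3 - 1$ for each $i$; dually, every vertex has at least $n/3 + 1$ non-neighbors in $G$, all outside its own clique. So $\overline{G}$ is a triangle-free tripartite graph on parts $V_1, V_2, V_3$ with $\delta(\overline{G}) > n/3$.

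Next I fix a maximum matching $M$ of $\overline{G}$. Because $\alpha(G) = 2$, every color class of a proper coloring of $G$ has size at most two, and those of size two are exactly the edges of a matching of $\overline{G}$; hence $\chi(G) = n - |M|$. For each edge $e = \{u, v\} \in M$ I declare one endpoint a \emph{branch} $b_e$ and the other a \emph{spare} $s_e$ (the orientation will be chosen later), and I declare every unmatched vertex a branch as well. This gives a branch set $B$ of size $\chi(G)$ and a spare set $S = V(G) \setminus B$ of size $|M|$. For every pair of adjacent branches I simply use the direct edge as the immersion path, and for every non-edge $\{b, b'\}$ of $G[B]$, which must have $b \in V_i$ and $b' \in V_j$ with $i \ne j$, I aim to route a length-two path through some spare $s \in S$.

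The edge-disjoint realization of all these length-two paths is the technical heart of the proof. Triangle-freeness of $\overline{G}$ plays a key role: for a non-edge $\{b, b'\}$ with $b \in V_i$ and $b' \in V_j$, a spare $s$ in the third clique $V_k$ cannot be in $\overline{G}$ with both $b$ and $b'$, and any spare in $V_i \cup V_j$ is automatically $G$-adjacent to the endpoint in its own clique. The task then reduces to a bipartite capacitated allocation problem on non-edges of $G[B]$ versus spares in $S$. To establish feasibility I plan a case analysis on the pair $(V_i, V_j)$ containing each non-edge, pick the branch/spare orientation on each $e \in M$ accordingly, and apply a Hall- or flow-type argument comparing the ``supply'' (the $G$-degree of a branch into $S$, bounded below via $\Delta(G) < 2n/3 - 1$) with the ``demand'' (the number of $\overline{G}$-edges at that branch inside $B$). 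I expect the hardest case to be the extremal configuration in which one clique (say $V_1$) is close to size $2n/3 - 2$ and the spares are concentrated in the two small cliques $V_2 \cup V_3$; this is precisely where the numerical bound $\Delta(G) < 2n/3 - 1$ must be used tightly in order to satisfy the Hall-type condition.
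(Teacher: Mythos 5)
Your approach is genuinely different from the paper's, and there is a real gap in it.

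The paper does not attack Theorem~\ref{thm:mainCliqueCovering} directly. It first applies the Chen--Jin--Koh theorem (Theorem~\ref{thm:ChenJinKoh}) to conclude that $\Gcompl$ is homomorphic to an \andrasfai\ graph $\Gamma_d$, then (in Theorem~\ref{thm:new-main}) passes to a $k$-critical minimal counterexample with connected complement so that Gallai's bound $|V(G)|\ge 2k-1$ applies, takes a maximal supergraph $H\supseteq\Gcompl$ that is a blow-up of $\Gamma_d$, extracts from Lemma~\ref{lemma:andrasfai_blowup_coloring} a $3$-coloring $\{I_1,I_2,I_3\}$ of $H$ in which $\oline{H}[I_2\cup I_3]$ has \emph{no induced $C_4$}, and finally applies Theorem~\ref{thm:graphs_with_special_clique_coloring}. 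There the branch set is $I_2\cup I_3$, the routing goes through the \emph{maximum clique} $I_1$, and the paths have \emph{length three}: a Hall matching in $\Gcompl[I_i,I_1]$ assigns each branch $u$ a single representative $r_u\in I_1$, the missing edge $uv$ is routed as $\langle u,r_v,r_u,v\rangle$, and the $C_4$-free condition is exactly what makes the middle edges $r_ur_v$ pairwise distinct. None of the \andrasfai/Chen--Jin--Koh machinery, the $C_4$-free coloring, or the Gallai criticality step appears in your outline, and these are the load-bearing pieces.

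The concrete gap is in your ``technical heart.'' What you actually need is a partition of $E\bigl(\Gcompl[B]\bigr)$ into classes $\{M_s:s\in S\}$ where each $M_s$ is a matching on $N_G(s)\cap B$: each spare $s$ can serve several non-edges $bb'$, but those non-edges must be pairwise vertex-disjoint (otherwise the two paths through $s$ share a $B$--$S$ edge), and $s$ must be $G$-adjacent to both ends. That is a \emph{list edge-coloring} of $\Gcompl[B]$ with color set $S$ and edge lists $N_G(b)\cap N_G(b')\cap S$, not a bipartite Hall or single-commodity flow problem, and no feasibility criterion of the kind you invoke is available. Even the weakest necessary condition --- that each branch $b$ sees at least $\deg_{\Gcompl[B]}(b)$ spares in $G$, equivalently $\deg_{\Gcompl}(b)\le|S|=\nu(\Gcompl)$ --- is not automatic; for instance, when $\Gcompl=K_{a,b}$ with $a>b$ we have $\Delta(\Gcompl)=a>b=\nu(\Gcompl)$, so the result depends entirely on which endpoint of each matching edge you declare a branch. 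You defer this choice (``the orientation will be chosen later'') and then never return to it; the case analysis you announce at the end is precisely where the proof would have to be, and it is missing. By contrast, the paper's length-$3$ routing through the large clique $I_1$ sidesteps the list-edge-coloring obstacle entirely: Hall gives one representative per branch, the clique $I_1$ supplies the middle edge for free, and the induced-$C_4$-free property --- which only comes from the \andrasfai\ structure, not from triangle-freeness plus a degree bound --- is what makes all paths edge-disjoint. Your setup (the identity $\chi(G)=n-\nu(\Gcompl)$, the tripartite structure of $\Gcompl$, triangle-freeness forcing a common $G$-neighbor for every non-edge) is all correct, but without the structural input from Theorem~\ref{thm:ChenJinKoh} I do not see how to close the allocation step, and as written it is not closed.
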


Naturally, Theorems~\ref{thm:mainChi} and~\ref{thm:mainCliqueCovering} imply the following.

\begin{corollary}
\label{cor:mainN}
  Let \(G\) be a graph on~\(n\) vertices with independence number~\(2\).
  If any of the conditions below  holds, then \(G\) contains a strong immersion of \(K_{\ceil{n/2}}\).
  \begin{itemize}
      \item[(i)] \(\Delta(G) < 19n/29 - 1\); or
      \item[(ii)] \(\Delta(G) < 2n/3 - 1\) and \(G\) has clique covering number at most \(3\).
  \end{itemize}
\end{corollary}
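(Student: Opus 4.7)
The plan is to deduce Corollary~\ref{cor:mainN} directly from Theorems~\ref{thm:mainChi} and~\ref{thm:mainCliqueCovering}, together with the elementary inequality \(\chi(G) \geq \lceil n/\alpha(G)\rceil\). Since \(\alpha(G) = 2\) forces \(\chi(G) \geq \lceil n/2\rceil\), the complete graph \(K_{\chi(G)}\) contains \(K_{\lceil n/2\rceil}\) as a subgraph. Hence it suffices to show that any strong immersion of \(K_{\chi(G)}\) in \(G\) restricts, simply by discarding branch vertices together with the paths that use them as endpoints, to a strong immersion of \(K_{\lceil n/2\rceil}\) in~\(G\).

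Concretely, I would first apply the appropriate main theorem: under hypothesis~(i), Theorem~\ref{thm:mainChi} produces a strong immersion \(I\) of \(K_{\chi(G)}\) in \(G\); under hypothesis~(ii), Theorem~\ref{thm:mainCliqueCovering} produces one. Let \(\ifun \colon V(K_{\chi(G)}) \to V(I)\) be the branch injection and \(\{P_e : e \in E(K_{\chi(G)})\}\) the accompanying family of edge-disjoint paths.

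I would then pick any subset \(T \subseteq V(K_{\chi(G)})\) with \(|T| = \lceil n/2\rceil\), restrict \(\ifun\) to \(T\), and retain only those paths \(P_{uv}\) with \(u,v \in T\). The ends of these retained paths are precisely the vertices of \(\ifun(T)\); the paths remain pairwise edge-disjoint; and, because the original immersion is strong, no internal vertex of any retained path belongs to \(\ifun(V(K_{\chi(G)}))\), and in particular none belongs to \(\ifun(T)\). Taking a minimal subgraph of \(G\) containing these restricted paths therefore produces the desired strong immersion of \(K_{\lceil n/2\rceil}\) in~\(G\).

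I do not anticipate any genuine obstacle: the corollary is a bookkeeping consequence of the two main theorems, the only point to verify being that a strong immersion really does restrict to a strong immersion on any subset of branch vertices, which is immediate from the definition. One mild wrinkle is that Theorem~\ref{thm:mainChi} carries the hypothesis \(n \geq 11\), so for part~(i) a handful of small-\(n\) cases would in principle need separate treatment; but in that regime the bound \(\Delta(G) < 19n/29 - 1\) is already so restrictive, relative to the requirement \(\alpha(G)=2\), that the conclusion is either vacuous or trivial.
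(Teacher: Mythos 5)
Your proposal is correct and takes the same route the paper intends: the paper's own justification consists of the single sentence ``Naturally, Theorems~\ref{thm:mainChi} and~\ref{thm:mainCliqueCovering} imply the following,'' and your argument --- that \(\alpha(G)=2\) forces \(\chi(G)\geq\ceil{n/2}\), so a strong immersion of \(K_{\chi(G)}\) restricts to one of \(K_{\ceil{n/2}}\) by keeping any \(\ceil{n/2}\) branch vertices and the paths between them --- is exactly the bookkeeping being left to the reader. The restriction step you describe is sound: edge-disjointness is inherited, strongness is inherited since the discarded interior vertices remain non-branch, and passing to a minimal subgraph of the retained paths gives an immersion in the paper's sense.

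The one place to be a bit more careful is your handling of the \(n\geq 11\) hypothesis inherited from Theorem~\ref{thm:mainChi} (and ultimately from Jin's Theorem~\ref{thm:jin}). Your remark that the small-\(n\) cases are ``vacuous or trivial'' is too hasty: for \(n=10\), for instance, taking \(\Gcompl\) to be the \(2\)-fold blow-up of \(C_5\) gives a \(4\)-regular triangle-free graph, so \(G\) has \(\alpha(G)=2\), \(\Delta(G)=5 < 190/29-1\), and one must actually exhibit an immersion of \(K_5\); this is not immediate. The clean fix is to observe that for \(n\leq 10\) the triangle-free graph \(\Gcompl\) automatically satisfies \(\chi(\Gcompl)\leq 3\), since the Gr\"otzsch graph (on \(11\) vertices) is the smallest triangle-free graph of chromatic number \(4\); thus \(G\) has clique covering number at most \(3\), and since \(19n/29-1 < 2n/3-1\), hypothesis~(ii) applies and Theorem~\ref{thm:mainCliqueCovering} (which carries no lower bound on \(n\)) finishes the job. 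With that substitution your argument is complete.
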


Our proof explores properties of the complement of the studied graph.
Specifically, we use the fact that triangle-free graphs with 
high minimum degree are homomorphic to the well-known 
\andrasfai~graphs (see Section~\ref{sec:andrasfai}).
Since \(C_5\) is an \andrasfai~graph, our result generalizes the case \(\alpha(G) = 2\) 
of the aforementioned result of Quiroz for graphs containing
complements of blow-ups of \andrasfai~graphs.
Indeed, our result is a consequence of a slightly more 
general result for graphs with a special \(3\)-clique cover (see Theorem~\ref{thm:graphs_with_special_clique_coloring}) and 
the fact that \andrasfai~graphs admit a corresponding proper coloring.

In fact, we prove the stronger statement that \(V(G)\) can be
partitioned into two sets \(A\), \(B\) such that 
(i) \(A\) induces a clique in \(G\), and 
(ii) \(G\) contains an immersion of a clique whose set of branch vertices 
is precisely~\(B\).
One of the main ideas of our proof is to use Hall's Theorem 
to identify, for each vertex \(u\in B\), a vertex \(r_u \in A\) 
that ``represents'' \(u\) in the sense that, when \(u\) has 
a missing adjacency, say \(uv \notin E(G)\) with \(v\in B\), 
we ``fix'' it by finding a path from~\(u\) to~\(v\) through~\(r_v\) and~\(r_u\).
The rest of the proof is to show that such paths are edge-disjoint.

Finally, although not making it explicit, 
the immersions found in this paper are \emph{totally odd} immersions,
meaning that each path in the immersion has an odd number of edges (see, for instance,~\cite{EcheverriaJimenezMishraQuirozYepez25}).

\paragraph{Organization of the paper.}
In Section~\ref{sec:andrasfai}, we present the \andrasfai~graphs,
which play an important role in this paper, and describe a special
coloring for them.
In Section~\ref{sec:main-result}, we use this coloring to
prove Theorem~\ref{thm:mainChi}.
Additionally, in Section~\ref{sec:2/5}, we present a somewhat simpler
proof of Theorem~\ref{thm:gauthier} that we believe to 
properly reveal the approach of Gauthier, Le, and Wollan~\cite{gauthier2019forcing}.
In particular, this presentation exposes a loose inequality 
that supports that Theorem~\ref{thm:gauthier} may not be tight.
This might be explored in further works.

\section{\andrasfai~graphs}
\label{sec:andrasfai}

Given graphs \(G\) and~\(H\), a \emph{homomorphism} from \(G\) to~\(H\)
is a function \(h\colon V(G) \to V(H)\) such that \(h(u)h(v) \in E(H)\) for every \(uv\in E(G)\).
When such a function exists, we say that \(G\) is \emph{homomorphic} to \(H\).

Let \(G\) be a triangle-free graph with \(n\) vertices.
\andrasfai~\cite{andrasfal1964graphentheoretische} showed that 
if \({\delta(G) > 2n/5}\), then \(G\) is bipartite.
This result was generalized in many directions, one of which is the following.
H{\"a}ggkvist~\cite{haggkvist1982odd} proved that if \(\delta(G) > 3n/8\), 
then \(G\) is \(3\)-colorable, and Jin~\cite[Theorem 9]{jin1995triangle} weakened 
this minimum degree condition proving that if \(\delta(G) > 10n/29\), 
then~\(G\) is \(3\)-colorable.
\begin{theorem}[Jin, 1995]\label{thm:jin}
    Let \(G\) be a triangle-free graph with \(n\geq 11\) vertices.
    If~\({\delta(G) > 10n/29}\), then \(\chi(G) \leq 3\).
\end{theorem}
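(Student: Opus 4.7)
My plan is to reduce the statement to the existence of a graph homomorphism from $G$ into a small \andrasfai~graph. Recall that the \andrasfai~graph $A_k$ is a specific triangle-free $k$-regular circulant graph on $3k-1$ vertices, which for $k \geq 2$ is properly $3$-colorable. A homomorphism $G \to A_k$ composed with a $3$-coloring of $A_k$ yields $\chi(G) \leq 3$, so it suffices to show that any triangle-free $G$ with $\delta(G) > 10n/29$ is homomorphic to some $A_k$ with $k \leq 10$. Note that $10/29 = 10/(3\cdot 10 - 1)$ is exactly the density ratio achieved by $A_{10}$, which explains why the \andrasfai~family sets the threshold.

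First, I would invoke the Andr\'asfai-Erd\H{o}s-S\'os observation that in a triangle-free graph the neighborhoods of adjacent vertices are disjoint, so that $|N(u) \cup N(v)| = \deg(u) + \deg(v) > 20n/29$, and hence their common non-neighborhood has size strictly less than $9n/29$. Iterating this estimate along short odd walks, I would define, for each vertex $v$, a ``fibre'' of vertices that share most of $v$'s closed neighborhood. Collapsing each fibre to a point should produce a homomorphic image $G^{\ast}$ of $G$ that is still triangle-free and whose minimum-degree fraction is at least that of $G$. Finally, I would argue that such a dense triangle-free quotient must itself be (homomorphic to) an \andrasfai~graph $A_k$, and the density condition $\delta(G^{\ast})/|V(G^{\ast})| > 10/29$ forces $k \leq 10$.

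The main obstacle will be the last step. The ratio $10/29$ lies strictly below H\"aggkvist's bound of $3/8$, so I cannot invoke his $3$-colorability theorem directly; instead, I need to extend the \andrasfai~structural analysis to rule out other potential $4$-chromatic triangle-free obstructions, such as high-odd-girth Hajnal-type constructions that live just below density $1/3$. I would expect to handle these by combining a parity argument based on the lengths of odd cycles in $G$ (which constrain any homomorphism into $A_k$) with a careful local case analysis near the pentagon $C_5 = A_2$, the smallest nontrivial \andrasfai~graph. The hypothesis $n \geq 11$ would come in to dispatch small exceptional quotients arising at the end of the collapsing procedure, and I would expect the bulk of the technical work to lie in verifying that the collapse is well-defined and terminates at the correct $A_k$.
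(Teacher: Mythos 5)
The paper does not prove Theorem~\ref{thm:jin} at all; it imports it verbatim from Jin~\cite[Theorem~9]{jin1995triangle} and uses it as a black box. So what you are proposing is not a reconstruction of the paper's argument but a new proof of Jin's theorem, and it has a genuine gap at its center.

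The gap is the step ``show that $G$ is homomorphic to some Andr\'asfai graph $\Gamma_d$.'' The only theorem of this type available (Chen--Jin--Koh, which is Theorem~\ref{thm:ChenJinKoh} in the paper) takes $\chi(G)\leq 3$ as a \emph{hypothesis}, so invoking it here would be circular. If instead you intend to derive the homomorphism directly by the ``fibre collapse'' you sketch, that does not work as stated: it is simply false that every triangle-free graph with $\delta>n/3$ maps homomorphically into an Andr\'asfai graph. Brandt and Thomass\'e's structure theorem (cited in Section~\ref{sec:conclusion} of the paper) shows that such graphs map into \emph{Vega} graphs, a strictly larger family that contains $4$-chromatic members. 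Ruling out precisely those obstructions once $\delta>10n/29$ is the entire content of Jin's proof, and your sketch defers it to ``a parity argument \ldots with a careful local case analysis'' without giving the argument. Likewise, the observation that $10/29$ is the degree ratio of $\Gamma_{10}$ does not by itself show that a graph with $\delta>10n/29$ cannot be a (non-uniform) blow-up of $\Gamma_d$ for $d\geq 11$ or of a non-Andr\'asfai graph; one needs an extremal/averaging argument over the automorphism group of the target, which you do not supply.

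In short: the reduction ``homomorphism into $\Gamma_d$ $\Rightarrow$ $\chi\leq 3$'' is fine, but the hard direction, producing that homomorphism without already knowing $\chi\leq 3$, is exactly Jin's theorem and remains unproved in your proposal. Either cite Jin outright, as the paper does, or be prepared to reproduce the full structural case analysis; the fibre-collapse heuristic on its own does not close the gap.
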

Chen, Jin, and Koh~\cite[Theorem 3.8]{chen1997triangle} strengthened Theorem~\ref{thm:jin}
by exposing the structure of triangle-free graphs with chromatic number \(3\) 
and high minimum degree.
Specifically, they proved the following result, where \(\Gamma_d\) is
the graph \((V_d,E_d)\) for which \(V_d = [3d-1]\) and
\({E_d = \left\{xy : y = x + i \text{ with } i \in [d,2d-1]\right\}}\),
with arithmetic modulo~\(3d-1\).
The graphs \(\Gamma_d\) for \(d \in \mathbb{N}\) are known as 
the \emph{\andrasfai~graphs} (see Figure~\ref{fig:andrasfai_graph}).

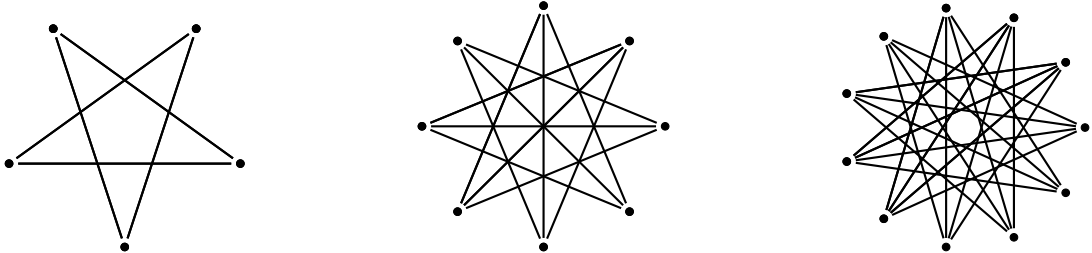
\begin{figure}[ht]
  \centering
    \begin{tikzpicture}[scale = 0.8]
      \foreach \u in {1,...,12}{
        \node [black vertex] (\u) at (360/5 * \u-18:2cm) {};
      }
      \foreach \u in {1,...,5}{
        \foreach \i in {2,...,3}{
          \pgfmathtruncatemacro{\v}{\u + \i};
          \draw [thick] (\u) to (\v);
        }
      }
    \end{tikzpicture}
    \hspace{2cm}
    \begin{tikzpicture}[scale = 0.8]
      \foreach \u in {1,...,15}{
        \node [black vertex] (\u) at (360/8 * \u:2cm) {};
      }
      \foreach \u in {1,...,5}{
        \foreach \i in {3,...,5}{
          \pgfmathtruncatemacro{\v}{\u + \i};
          \draw [thick] (\u) to (\v);
        }
      }
      \end{tikzpicture}
      \hspace{2cm}
      \begin{tikzpicture}[scale = 0.8]
        \foreach \u in {1,...,18}{
          \node [black vertex] (\u) at (360/11 * \u:2cm) {};
        }
        \foreach \u in {1,...,7}{
          \foreach \i in {4,...,7}{
            \pgfmathtruncatemacro{\v}{\u + \i};
            \draw [thick] (\u) to (\v);
          }
        }
      \end{tikzpicture}
    
    \caption{The \andrasfai~graphs $\Gamma_2$, $\Gamma_3$, and $\Gamma_4$.}
    \label{fig:andrasfai_graph}
\end{figure}

\begin{theorem}[Chen--Jin--Koh, 1997]
\label{thm:ChenJinKoh}
  If \(G\) is a triangle-free graph on \(n\) vertices	for which \(\delta(G) > n/3\) and \(\chi(G) \leq 3\),
  then \(G\) is homomorphic to \(\Gamma_d\) for some \(d\).
\end{theorem}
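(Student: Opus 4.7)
The plan is to build the desired homomorphism by exploiting the strong structural constraints that $\delta(G) > n/3$ imposes on a triangle-free graph with $\chi(G) \leq 3$. If $\chi(G) \leq 2$ then $G$ is bipartite and maps to $\Gamma_1 = K_2$, so I may assume $\chi(G) = 3$, whence $G$ contains an induced odd cycle of length at least~$5$. Fix such a shortest induced odd cycle $C$; the strategy is to use $C$ as a ``skeleton'' and to assign every vertex of $G$ a circular coordinate in $\mathbb{Z}_{3d-1}$ for some $d \geq 2$.

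To define the coordinate, I would consider, for each vertex $v \in V(G)$, the set $N(v) \cap V(C)$. Triangle-freeness of $G$ together with the minimality of $C$ force this intersection to be an independent set of $C$, and the minimum-degree condition would be used to show that as $v$ ranges over $V(G)$ only $3d-1$ distinct patterns arise, indexed cyclically in $\mathbb{Z}_{3d-1}$ in a way compatible with the \andrasfai{} adjacency rule. A preliminary reduction that can help here is to iteratively identify pairs of vertices with identical open neighborhoods: any resulting contraction is a graph homomorphism (since twins are non-adjacent in a triangle-free graph) and preserves triangle-freeness and the $3$-colorability, so it suffices to construct the coordinate on the twin-free quotient where the patterns $N(v) \cap V(C)$ are forced to be distinct. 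Once the coordinates are defined, adjacency of two vertices of $G$ forces their coordinates to differ by an element of $\{\pm d, \pm(d+1), \ldots, \pm(2d-1)\}$ modulo $3d-1$, which is exactly the adjacency rule of $\Gamma_d$.

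The main obstacle will be identifying the correct value of $d$, showing that only $3d-1$ neighborhood patterns arise, and in particular that the coordinate assignment is edge-consistent throughout $G$. Since the density bound $\delta(G)/n > 1/3$ is essentially tight for $\Gamma_d$ (which has density $d/(3d-1) \to 1/3$), these counts have very little slack and the verification requires a delicate structural analysis, most likely refining the arguments of \andrasfai{} and Jin and making essential use of triangle-freeness and the minimality of $C$. In particular, controlling how vertices not on $C$ attach to $C$—ruling out both ``gaps'' and ``extras'' in the induced pattern—is where the hypothesis on $\delta(G)$ must be applied with greatest care.
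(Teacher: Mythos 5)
This theorem is imported by the paper from Chen, Jin, and Koh (1997); the paper itself offers no proof of it, so there is no internal argument to compare against, and your attempt must be judged as a self-contained proof. Read that way, it is a plausible plan but not a proof: the passage ``the verification requires a delicate structural analysis, most likely refining the arguments of Andr\'asfai and Jin'' names, rather than supplies, precisely the content of the theorem. The heart of the Chen--Jin--Koh argument is to show that the attachment patterns $N(v)\cap V(C)$ can be cyclically ordered so that adjacency in $G$ agrees with the Andr\'asfai rule and that exactly $3d-1$ patterns arise for a suitable $d$; none of this is established by your outline, and even the value of $d$ (and its relation to the odd girth of $G$) is left open.

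There is also a concrete defect in the one reduction you do carry out. Identifying non-adjacent twins $u,v$ with $N(u)=N(v)$ is indeed a homomorphism $G\to G'$ that preserves triangle-freeness and $\chi\le 3$, but it does \emph{not} preserve the hypothesis $\delta(G) > n/3$: every $w\in N(u)$ loses one incident edge, so in the worst case $\delta(G')=\delta(G)-1$ while $|V(G')|=n-1$; since $\delta(G)<n$ one has $(\delta(G)-1)/(n-1) < \delta(G)/n$, and for instance when $n\equiv 1\pmod 3$ and $\delta(G)=(n+2)/3$ the contracted graph violates the bound $\delta(G') > |V(G')|/3$. Consequently you cannot ``construct the coordinate on the twin-free quotient'' simply by reducing to the same statement on $G'$; you would need either a strengthened inductive hypothesis or a direct argument that does not re-invoke the degree condition after contraction.
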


Observe that, together, Theorems~\ref{thm:jin} and~\ref{thm:ChenJinKoh} imply 
that every triangle-free graph on \(n\geq 11\) vertices with minimum degree greater than \(10n/29\) is homomorphic to \(\Gamma_d\) for some~\(d\).

In what follows, given a graph \(H\), we denote by \(\oline{H}\) its 
complement and, for \(S \subseteq V(H)\), we denote by \(H[S]\) the subgraph of \(H\) induced by \(S\). Given \(u \in V(H)\) and \(X \subseteq V(H)\),
we use \(N_X(u)\) to denote the set of neighbors of \(u\) in \(X\).
When \(X = V(H)\), we simply write \(N(u)\), and use \(N[u]\) to denote \(N(u) \cup \{u\}\).
In this paper we use the following property of~\(\Gamma_d\) which says,
in particular, that an \andrasfai\ graph with a maximal independent 
set~\(D\) admits a \(3\)-coloring having~\(D\) as one of the color classes.

\begin{lemma}
\label{lemma:andrasfai_coloring}
  If \(d\in\mathbb{N}\) and \(D_1\) is a maximal independent set of \(\Gamma_d\), then \(\Gamma_d\) admits a \(3\)-coloring \(\{D_1,D_2,D_3\}\)
  such that \(\oline{\Gamma_d}[D_2\cup D_3]\) has no induced \(C_4\).
\end{lemma}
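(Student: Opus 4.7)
The plan is to work with $\Gamma_d$ as a Cayley graph on $\mathbb{Z}_{3d-1}$ with connection set $\{d, d+1, \ldots, 2d-1\}$ and to exploit its rotational automorphisms. First I would classify the maximal independent sets of $\Gamma_d$: two vertices $x, y \in \mathbb{Z}_{3d-1}$ are non-adjacent in $\Gamma_d$ exactly when their cyclic distance lies in $\{1, \ldots, d-1\}$, so any independent set is contained in a cyclic arc of $d$ consecutive residues. Combined with maximality, this forces $D_1$ to equal such a window of length exactly $d$, and by the rotational automorphism $x \mapsto x+c$ I may assume $D_1 = \{1, 2, \ldots, d\}$.

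Next, I would partition $V(\Gamma_d) \setminus D_1$ into $L = \{d+1, \ldots, 2d-1\}$, $M = \{2d\}$, and $R = \{2d+1, \ldots, 3d-1\}$. A short check of cyclic differences shows that $L$ and $R$ are independent in $\Gamma_d$; that $2d$ is isolated in $\Gamma_d[V(\Gamma_d) \setminus D_1]$ because its cyclic distance to every other vertex outside $D_1$ is at most $d-1$; and that $(d+i)(2d+j) \in E(\Gamma_d)$ iff $j \geq i$ for $i, j \in \{1, \ldots, d-1\}$. In particular $\Gamma_d[V(\Gamma_d) \setminus D_1]$ is bipartite, so letting $D_2 = L \cup \{2d\}$ and $D_3 = R$ gives a proper $3$-coloring $\{D_1, D_2, D_3\}$ of $\Gamma_d$.

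It remains to rule out an induced $C_4$ in $\oline{\Gamma_d}[V(\Gamma_d) \setminus D_1]$, and I would do this by case analysis on how the four cycle vertices split among $L$, $M$, and $R$. In this complement both $L$ and $R$ induce a clique, and $2d$ is a universal vertex; hence placing three or more vertices in $L$ or in $R$ produces a forbidden triangle, and including $2d$ would force it to have degree three inside the cycle, impossible in a $C_4$. The only remaining configuration has two vertices $d+i_1, d+i_2 \in L$ and two vertices $2d+j_1, 2d+j_2 \in R$, and this is where I expect the main work: using the complementary rule $(d+i)(2d+j) \in E(\oline{\Gamma_d})$ iff $j < i$, the alternation of edges and non-edges around the $4$-cycle translates into the simultaneous strict inequalities $i_1 < i_2$ and $i_2 < i_1$ (for one assignment of the diagonals, and the symmetric swap for the other), giving the required contradiction and completing the proof.
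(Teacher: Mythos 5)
Your proof is correct and takes essentially the same approach as the paper: you take the same canonical $D_1=\{1,\dots,d\}$, the same $D_2=\{d+1,\dots,2d\}$ and $D_3=\{2d+1,\dots,3d-1\}$, and rule out an induced $C_4$ in $\oline{\Gamma_d}[D_2\cup D_3]$ by exploiting that the cross-adjacencies between the two independent sides are determined by an order relation on the cyclic positions. The paper phrases this more compactly (it observes that an induced $C_4$ in the complement corresponds to an induced two-edge matching in $\Gamma_d[D_2\cup D_3]$, and then invokes nestedness of the neighborhoods $N_{D_3}(u)$ for $u\in D_2$ to kill such a matching in one stroke), whereas you perform an explicit case analysis splitting off $L$, $\{2d\}$, $R$ and unwind the nestedness into inequalities on indices $i,j$; the two arguments are the same idea with different bookkeeping, and your extra cases (three vertices in one side, or $2d$ among the four) are subsumed automatically in the paper's matching formulation since any induced matching edge must cross from $D_2$ to $D_3$ and $2d$ has no neighbors in $D_3$.
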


\begin{proof}
  We first observe that the maximal independent sets of \(\Gamma_d\)
  consist precisely of sequences of \(d\) cyclically consecutive vertices of \(\Gamma_d\).
  Indeed,	by the definition of \(E_d\), two vertices \(u\) and \(v\) are adjacent 
  if and only if \(u\) and \(v\) have (circular) distance at least \(d\).
  Thus, we may assume, without loss of generality, that \(D_1 = \{1,\ldots,d\}\).
  Let \(D_2 = \{d+1,\ldots,2d\}\) and \(D_3 = \{2d+1,\ldots, 3d-1\}\).
  As just observed, \(D_2\) and \(D_3\) are independent sets of \(\Gamma_d\).

  Now, we claim that \(\overline{\Gamma_d}[D_2\cup D_3]\) has no induced \(C_4\).
  For this, we prove that \(\Gamma_d[D_2\cup D_3]\) has no induced matching with two edges.
  Suppose, for a contradiction, that \(\Gamma_d[D_2\cup D_3]\) has an induced
  matching \(M\) with two edges.
  Since \(D_2\) and \(D_3\) are independent sets, the edges of \(M\) must join 
  vertices from \(D_2\) to vertices of \(D_3\).
  Say \(M = \{uv,u'v'\}\) with \(u,u' \in D_2\) and \(v,v'\in D_3\).
  By the definition of \(E_d\), either \(N_{D_3}(u) \subseteq N_{D_3}(u')\) 
  or \(N_{D_3}(u') \subseteq N_{D_3}(u)\).
  Assume, without loss of generality, that \(N_{D_3}(u') \subseteq N_{D_3}(u)\).
  Then \(v' \in N_{D_3}(u)\), and hence \(\Gamma_d[\{u,u',v,v'\}]\) is not a matching,
  a contradiction.
\end{proof}

The next step is to generalize the coloring given by Lemma~\ref{lemma:andrasfai_coloring}
to blow-ups of $\Gamma_d$.
Given a graph \(H\), a \emph{blow-up} of \(H\) is any graph obtained 
from \(H\) by replacing each vertex \(u \in V(H)\) by a (possibly empty) 
independent set \(V_u\), and each edge \(uv \in E(H)\) by the complete 
bipartite graph with bipartition \((V_u, V_v)\).
Formally, a graph \(G\) is a blow-up of a graph \(H\) if \(V(G)\) admits
a partition \(\big\{V_u \subseteq V(G) : u \in V(H)\big\}\) 
for which \(E(G) = \bigcup_{uv \in E(H)} E(K_{V_u,V_v})\),
where \(K_{A,B}\) is the complete bipartite graph with bipartition \((A, B)\).
Observe that if a graph \(G\) is a maximal graph homomorphic to \(H\),
then \(G\) must be a blow-up of \(H\).

\begin{lemma}
\label{lemma:andrasfai_blowup_coloring}
  Let \(G\) be a maximal graph homomorphic to \(\Gamma_d\) for some \(d\in \mathbb{N}\).
  If \(I_1\) is a maximal independent set of \(G\), 
  then \(G\) admits a \(3\)-coloring \(\{I_1,I_2,I_3\}\)
  such that \(\overline{G}[I_2\cup I_3]\) has no induced \(C_4\).
\end{lemma}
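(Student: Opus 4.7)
The strategy is to lift the $3$-coloring provided by Lemma~\ref{lemma:andrasfai_coloring} through the blow-up structure of $G$. As observed just before the lemma, the maximality of $G$ as a graph homomorphic to $\Gamma_d$ forces $G$ to be a blow-up of $\Gamma_d$: there is a partition $\{V_u : u \in V(\Gamma_d)\}$ of $V(G)$ into (possibly empty) independent sets with a complete bipartite graph between $V_u$ and $V_v$ precisely when $uv \in E(\Gamma_d)$.

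First I would analyze the structure of $I_1$ inside this blow-up. Setting $S = \{u \in V(\Gamma_d) : V_u \cap I_1 \neq \emptyset\}$, the set $S$ must be independent in $\Gamma_d$ (otherwise $I_1$ would contain an edge), and since all vertices of $V_u$ share the same $G$-neighborhood, maximality of $I_1$ forces $I_1 = \bigcup_{u \in S} V_u$. I would then extend $S$ to a maximal independent set $S^*$ of $\Gamma_d$. Any $u \in S^* \setminus S$ must satisfy $V_u = \emptyset$: otherwise a vertex of $V_u$ is non-adjacent in $G$ to all of $I_1$ (since $u$ is non-adjacent to every vertex of $S$ in $\Gamma_d$), contradicting maximality of $I_1$. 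In particular, $\bigcup_{u \in S^*} V_u = I_1$.

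Next I would apply Lemma~\ref{lemma:andrasfai_coloring} with $D_1 = S^*$ to obtain a $3$-coloring $\{D_1, D_2, D_3\}$ of $\Gamma_d$ such that $\overline{\Gamma_d}[D_2 \cup D_3]$ has no induced $C_4$; equivalently, $\Gamma_d[D_2 \cup D_3]$ contains no induced matching of size two. Then I would define $I_i = \bigcup_{u \in D_i} V_u$ for $i \in \{1,2,3\}$; these partition $V(G)$ into independent sets (since each $D_i$ is independent in $\Gamma_d$), and $I_1$ as just defined agrees with the given one.

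It remains to check that $\overline{G}[I_2 \cup I_3]$ has no induced $C_4$, equivalently that $G[I_2 \cup I_3]$ has no induced matching of size two. I would argue by contradiction: assume such a matching $\{u_1 v_1, u_2 v_2\}$ exists with $u_i \in I_2$ and $v_i \in I_3$, and let $a_i \in D_2$ and $b_i \in D_3$ be the classes containing $u_i$ and $v_i$. If $a_1 \neq a_2$ and $b_1 \neq b_2$, then $\{a_1 b_1, a_2 b_2\}$ is an induced matching of size two in $\Gamma_d[D_2 \cup D_3]$, contradicting the conclusion of Lemma~\ref{lemma:andrasfai_coloring}. Otherwise, some equality occurs; for instance, if $a_1 = a_2$, then $a_2 b_2 \in E(\Gamma_d)$ yields $u_1 v_2 \in E(G)$, contradicting that the matching is induced. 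The main bookkeeping step is the empty-class extension in the second paragraph, needed because a maximal independent set of $G$ may correspond to a non-maximal independent set of $\Gamma_d$; once this is settled, the rest is a clean translation between $G$ and $\Gamma_d$ via the blow-up.
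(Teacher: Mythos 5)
Your proof is correct and follows essentially the same route as the paper: pull the colouring of Lemma~\ref{lemma:andrasfai_coloring} back through the blow-up structure, and verify the no-induced-$C_4$ condition by showing $G[I_2\cup I_3]$ has no induced two-edge matching. What you do differently — and, in fact, more carefully — is the ``empty-class extension'' step. The paper asserts outright that every maximal independent set of $G$ is mapped by the homomorphism $h$ to a maximal independent set of $\Gamma_d$, but this can fail when some blow-up classes $V_u=h^{-1}(u)$ are empty: for instance, $G=P_3$ can be realized as a maximal graph homomorphic to $\Gamma_2=C_5$ with three empty classes, and both of its maximal independent sets then map to singletons of $C_5$, which are not maximal. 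Your version — take $S=\{u : V_u\cap I_1\neq\emptyset\}$, note it is merely independent, extend it to a maximal independent set $S^*$ of $\Gamma_d$, and check that the maximality of $I_1$ forces $V_u=\emptyset$ for every $u\in S^*\setminus S$ so that $I_1=\bigcup_{u\in S^*}V_u$ still holds — is exactly the patch needed; the rest of the argument (independence of each $I_i$, the case analysis on whether $a_1=a_2$ or $b_1=b_2$, and the reduction to an induced matching in $\Gamma_d[D_2\cup D_3]$) matches the paper's.
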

\begin{proof}
  Let \(h \colon V(G) \to V(\Gamma_d)\) be a homomorphism from \(G\) to \(\Gamma_d\).
  For each \(i\in [3d-1]\), let \(V_i = h^{-1}(i) = \{u\in V(G) : h(u) = i\}\)
  be the set of vertices of \(G\) mapped to \(i\),
  and put \(\mathcal{V} = \big\{V_i : i\in[3d-1]\big\}\).
  Note that, by the maximality of \(G\), every independent set of \(G\) is mapped 
  to an independent set of \(\Gamma_d\).
  Moreover, every maximal independent set of \(G\) is mapped to a maximal 
  independent set of \(\Gamma_d\).
  Thus \(D_1 = h(I_1) = \{h(u) : u \in I_1\}\) is a maximal independent set
  of \(\Gamma_d\).
  Let \(\{D_1,D_2,D_3\}\) be the \(3\)-coloring of \(\Gamma_d\) given by 
  Lemma~\ref{lemma:andrasfai_coloring}, and, for \(i = 2,3\), let
  \(I_i = h^{-1}(D_i) = \{u \in V(G) : h(u) \in D_i\}\).
  Naturally, \(I_i\) is an independent set of \(G\) for \(i = 1,2,3\),
  and, since \(V(\Gamma_d) = D_1\cup D_2\cup D_3\), we have 
  \(V(G) = I_1 \cup I_2 \cup I_3\).
  Therefore, \(\{I_1,I_2,I_3\}\) is a \(3\)-coloring of \(G\).

  Now, we claim that \(\Gcompl[I_2\cup I_3]\) has no induced \(C_4\).
  For this, we prove that \(G[I_2\cup I_3]\) has no induced matching with two edges.
  Suppose, for a contradiction, that \(G[I_2\cup I_3]\) has an induced 
  matching \(M = \{uv,u'v'\}\).
  Since \(I_2\) and \(I_3\) are independent sets, we may assume, without 
  loss of generality,	that \(u,u' \in I_2\) and \(v,v'\in I_3\).
  By its maximality, \(G\) must be a blow-up of \(\Gamma_d\), and hence
  if \(u\) and \(u'\) are in the same element of \(\mathcal{V}\),
  then \(uv', u'v\in E(G)\), a contradiction.
  Therefore, \(u\) and \(u'\) are in different elements of \(\mathcal{V}\).
  Analogously, \(v\) and \(v'\) are in different elements of \(\mathcal{V}\).
  This implies that \(\Gamma_d\big[\{h(u),h(u'), h(v), h(v')\}\big]\) is 
  a matching with two edges, a contradiction.
\end{proof}

\section{Dense graphs with bounded maximum degree}
\label{sec:main-result}

In this section, we prove Theorem~\ref{thm:mainChi}.
The proof is divided into two steps.
First we use that if \(G\) is a triangle-free graph with \(n\) vertices,
independence number \(2\), and maximum degree less than \(19n/29 - 1\),
then \(\Gcompl\) admits a \(3\)-coloring as in Lemma~\ref{lemma:andrasfai_blowup_coloring};
next, we show that every graph \(G\) whose complement admits such a \(3\)-coloring
contains an immersion of~\(K_{\chi(G)}\).
For that, given a positive integer~\(k\),
a \emph{\(k\)-clique coloring} of a graph \(G\) is a partition \(\{D_1,\ldots, D_k\}\) of \(V(G)\)
such that \(D_i\) is a clique of \(G\) for every \(i\in[k]\).

For \(X,Y \subseteq V(G)\) with \(X \cap Y = \emptyset\), 
we denote by \(G[X,Y]\) the bipartite subgraph of \(G\) 
with vertex set \(X \cup Y\) and all edges of \(G\) between \(X\) and \(Y\). 
The proof of the next result uses \(G\) and its complement \(\Gcompl\) at the same time.
To avoid confusion, we write \(\oline{N}_{\!X}(u)\) to refer to 
the vertices in \(X\setminus\{u\}\) that are not adjacent to \(u\) in \(G\),
and \(\oline{N}_{\!X}(Y)\) to refer to the union \(\bigcup_{u\in Y}\oline{N}_{\!X}(u)\),
which is the set of vertices in \(X\) that are nonadjacent in \(G\) to at least one vertex in \(Y\).
Observe that \(\overline{N}\) is precisely the neighborhood function in \(\Gcompl\).

\begin{theorem}
\label{thm:graphs_with_special_clique_coloring}
  Let \(G\) be a graph with independence number~\(2\).
  If \(G\) admits 
  a \(3\)-clique coloring \(\{D_1,D_2,D_3\}\) such that
  (i) \(D_1\) is a maximum clique of \(G\); and
  (ii) \(G[D_2\cup D_3]\) has no induced \(C_4\),
  then \(G\) contains an immersion of a clique whose set of branch vertices 
  is precisely \(D_2\cup D_3\).
\end{theorem}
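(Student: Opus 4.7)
The plan is to build, for every pair of vertices in $B := D_2 \cup D_3$, an edge-disjoint path in $G$ joining them; together these form the desired clique immersion with branch set~$B$. Adjacent pairs in $G[B]$ are taken care of by the corresponding edge itself, so the real work concerns the non-adjacent pairs; since $D_2, D_3$ are cliques in $G$, these are exactly the edges of $\overline{G}[D_2, D_3]$, which by hypothesis~(ii) has no induced $2K_2$ (equivalently, is \emph{chain-bipartite}).

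For each $u \in B$, write $\overline{N}_{D_1}(u) := D_1 \setminus N_G(u)$. Maximality of $D_1$ forces $\overline{N}_{D_1}(u) \neq \emptyset$, since otherwise $D_1 \cup \{u\}$ would be a larger clique. The key structural observation, exploiting that $\alpha(G) = 2$ makes $\overline{G}$ triangle-free, is that any $r \in \overline{N}_{D_1}(u)$ is $G$-adjacent to every $B$-non-neighbor of $u$: for $v \in B$ with $uv \notin E(G)$, the edges $ru$ and $uv$ in $\overline{G}$ rule out $rv \in E(\overline{G})$, yielding $rv \in E(G)$. My aim would be to apply Hall's theorem to produce an injection $r : B \to D_1$ with $r_u \in \overline{N}_{D_1}(u)$ for every $u \in B$, and then assign to each non-adjacent pair $\{u,v\}$ the length-three path $P_{uv} := u\, r_v\, r_u\, v$. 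Its three edges $u r_v$, $r_v r_u$, $r_u v$ all lie in $G$: the outer two by the observation above (applied to $u$ with representative $r_v$, and to $v$ with representative $r_u$), and the middle one because $r_u, r_v$ are two distinct vertices of the clique $D_1$ (distinct because $r_u = r_v$ together with $uv \notin E(G)$ would make $\{u, v, r_u\}$ an independent triple, contradicting $\alpha(G) = 2$). Edge-disjointness of the family $\{P_{uv}\}$ is then a routine bookkeeping, using injectivity of $r$: the cross edge of $P_{uv}$ incident to $u$ is $u r_v$, uniquely determined by~$v$; the cross edge at $v$ is $r_u v$, uniquely determined by~$u$; and the $D_1$-internal edge $r_u r_v$ is uniquely determined by the unordered pair $\{u,v\}$. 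Since these edges live in regions disjoint from $E(G[B])$, they cannot collide with the single-edge paths used for adjacent pairs either.

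The crux of the proof is thus verifying Hall's condition on the bipartite graph $H$ with parts $B$ and $D_1$ and edge set $\{(u, a) : a \in \overline{N}_{D_1}(u)\}$. For $S \subseteq B$ with $T := \overline{N}_{D_1}(S)$, every $a \in D_1 \setminus T$ is $G$-adjacent to all of~$S$, so for any clique $K \subseteq S$ in $G[S]$ the union $K \cup (D_1 \setminus T)$ is a clique, and maximality of $|D_1|$ yields $|T| \geq |K|$. Applying K\"onig's theorem to the bipartite graph $\overline{G}[S \cap D_2, S \cap D_3]$ furnishes a clique $K$ of size $|S| - \nu$ in $G[S]$, where $\nu$ is its maximum matching, so the naive argument only gives $|T| \geq |S| - \nu$, which is short of Hall's condition by $\nu$. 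Closing this gap is the most delicate step, and I expect it to require exploiting the chain-bipartite hypothesis: for each edge of the matching one should be able either to exhibit an additional $D_1$-vertex extending the clique (contradicting $|D_1|$'s maximality), or equivalently to restrict Hall's argument to $B^* := \{u \in B : \overline{N}_B(u) \neq \emptyset\}$ and show $|B^*| \leq |D_1|$ using the chain structure (and, in the extreme case $B^* = B$, the fact that the $\overline{G}$-maximum vertex on each side of $D_2, D_3$ is $\overline{G}$-adjacent to the entire opposite side forces $\overline{N}_{D_1}$ of that side into a "restricted" region of $D_1$). This Hall-type counting, where the interplay between $D_1$'s maximality and the chain-bipartite structure of $\overline{G}[D_2, D_3]$ must be carefully exploited, is the portion of the argument I expect to demand the most technical care.
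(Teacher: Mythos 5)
Your plan matches the paper's in several respects: length-$3$ paths $\langle u, r_v, r_u, v\rangle$ routed through representatives in $D_1$, the observation that $\alpha(G)=2$ forces any $r \in \oline{N}_{D_1}(u)$ to be $G$-adjacent to every non-neighbor of $u$, and Hall's theorem as the tool for producing representatives. But there is a genuine gap, and you have correctly put your finger on it without seeing how to close it: you seek a single \emph{globally} injective map $r \colon D_2 \cup D_3 \to D_1$, and Hall's condition on all of $B = D_2 \cup D_3$ is not available, since the clique-extension argument that gives $|\oline{N}_{D_1}(S)|\geq|S|$ requires $S$ to be a clique, which fails for mixed $S$ containing non-adjacent pairs across $D_2$ and $D_3$. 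Your König bound $|T| \geq |S| - \nu$ is exactly as far as that argument goes, and the route you sketch for closing the gap (massaging the chain structure of $\oline{G}[D_2, D_3]$ into a stronger Hall bound) is not the right use of hypothesis~(ii).

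The fix is to drop global injectivity. Apply Hall's theorem \emph{twice}: once to $\Gcompl[D_2, D_1]$, obtaining a matching $M_2$ covering $D_2$, and once to $\Gcompl[D_3, D_1]$, obtaining a matching $M_3$ covering $D_3$. Now Hall's condition only needs to be checked for $C \subseteq D_i$ with $i$ fixed, where $C$ is automatically a clique, so $\bigl(D_1 \setminus \oline{N}_{D_1}(C)\bigr) \cup C$ is a clique and maximality of $D_1$ gives the bound immediately --- no use of hypothesis~(ii) at all. The price is that $r$ may fail to be injective across the two sides: a single $a \in D_1$ can be $r_u$ for some $u \in D_2$ and simultaneously $r_v$ for some $v \in D_3$. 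But in that case $\{u, v, a\}$ cannot be independent, so $uv \in E(G)$ and no path $P_{uv}$ is required anyway. Hypothesis~(ii) is then used precisely where your argument claims edge-disjointness is ``routine'': if two paths $P_{uv}, P_{u'v'}$ shared the middle edge $r_u r_v = r_{u'} r_{v'}$ with $\{u,v\} \neq \{u',v'\}$, the within-side injectivity of $M_2$ and $M_3$ rules out $r_u = r_{u'}$, $r_v = r_{v'}$, leaving only the crossed identification $r_u = r_{v'}$, $r_v = r_{u'}$; that forces $uv', u'v \in E(G)$ (shared representatives again), which together with $uv, u'v' \notin E(G)$ makes $\{u,u',v,v'\}$ an induced $C_4$ in $G[D_2 \cup D_3]$, contradicting~(ii). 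So the no-$C_4$ hypothesis belongs in the edge-disjointness step, not in Hall's condition, and the ``delicate'' global Hall argument you anticipated is not needed.
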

\begin{proof}
  Let \(E' = E\big(G[D_2\cup D_3]\big)\).
  To find the desired immersion, since \(D_2\) and \(D_3\) are cliques of \(G\), 
  we only need to find a collection of edge-disjoint paths
  \(\{P_{uv} \colon {u\in D_2,\ v\in D_3} \text{ and } {uv\notin E'}\}\),
  where each \(P_{uv}\) is a path in \(G-E'\) joining \(u\) and \(v\).
  In fact, we can find such paths so that each \(P_{uv}\) has length \(3\) and 
  their internal vertices are in \(D_1\).
  For this, for each \(u\in D_2 \cup D_3\), we find a vertex \(r_u \in D_1\) 
  with \(ur_u \notin E(G)\) that ``represents'' \(u\) in \(D_1\),
  meaning that, for every \(uv\notin E(G)\), the path \(P_{uv}\) is 
  the path \(\langle u, r_v, r_u, v\rangle\).
  In what follows, we show how to find these vertices.

  Let \(i\in\{2,3\}\) and let \(C\subseteq D_i\).
  If \({|\oline{N}_{\!D_1}(C)| < |C|}\), then the set
  \({(D_1\setminus \oline{N}_{\!D_1}(C))\cup C}\) is a clique in \(G\)
  larger than \(D_1\), a contradiction to the maximality of \(D_1\).
  So, \({|\oline{N}_{\!D_1}(C)| \geq |C|}\) for every subset \(C\) of \(D_i\).
  Hence, by Hall's Theorem, there is a matching \(M_i\) in
  \(\Gcompl[D_i,D_1]\) that covers \(D_i\).

  Note that for each vertex \(u\in D_2 \cup D_3\) there is precisely one edge 
  in \(M_2\cup M_3\) that contains \(u\), and let \(r_u\in D_1\) be 
  the vertex such that \(ur_u\in M_2\cup M_3\).
  Note that \(r_u \notin N(u)\), and hence, because \(\alpha(G) = 2\), 
  \(r_u\) is adjacent in \(G\) to every non-neighbor of \(u\), that is, to
  every vertex in \(V(G)\setminus N[u]\).

  Let \(u\in D_2\) and \(v\in D_3\).
  Note that if \(r_u = r_v = w\), then \(uv \in E'\), otherwise
  \(u,v,w\) would be an independent set of size \(3\) in \(G\).
  Moreover, if \(uv\notin E'\), then \(r_u\in N(v)\) and \(r_v\in N(u)\),
  and also \(r_ur_v\in E(G)\), because \(r_u \neq r_v\) and \(D_1\) is a clique in \(G\)
  (see Figure~\ref{fig:induced-C4}(a)).

  Now, for every \(u\in D_2\) and \(v\in D_3\) with \(uv\notin E'\), let \(P_{uv}\) 
  be the path \(\langle u, r_v, r_u, v\rangle\) in \(G\).
  We claim that the paths \(P_{uv}\) with \(u\in D_2\), \(v\in D_3\), and
  \(uv\notin E'\) are pairwise edge-disjoint.
  Indeed, let \(u,u' \in D_2\) and \(v,v' \in D_3\) be such that \(uv,u'v' \notin E'\)
  and \(uv \neq u'v'\).
  Note that \(u\) and \(u'\) (resp.\ \(v\) and \(v'\)) are not necessarily distinct,
  but \(u \neq u'\) or \(v \neq v'\).
  If \(v\neq v'\), then \(r_v \neq r_{v'}\) because \(M_3\) is a matching.
  This implies that \(ur_v \neq u'r_{v'}\) (even if \(u = u'\)).
  Analogously, we deduce that if \(u \neq u'\), then \(vr_u \neq v'r_{u'}\).
  In what follows, we prove that \(r_ur_v \neq r_{u'}r_{v'}\).
  Suppose, for a contradiction, that \(r_ur_v = r_{u'}r_{v'}\).
  If \(r_u = r_{u'}\) and \(r_v = r_{v'}\), then \(u=u'\) and \(v=v'\) 
  because \(M_2\) and \(M_3\) are matchings, a contradiction.
  Thus, we must have \(r_u = r_{v'}\) and \(r_v = r_{u'}\).
  As argued in the previous paragraph, this implies that \(uv',vu' \in E'\) 
  (see Figure~\ref{fig:induced-C4}(b)).
  But then \(\{u,u',v,v'\}\) induces a \(C_4\) in \(G[D_2\cup D_3]\),
  a contradiction.

  Since \(D_2\) and \(D_3\) are cliques, and the paths \(P_{uv}\) with \(u\in D_2\),
  \(v\in D_3\), and \(uv\notin E'\) are edge-disjoint, 
  \({G[D_2\cup D_3]\cup\{P_e \colon e\notin E'\}}\) is the desired immersion.
\end{proof}

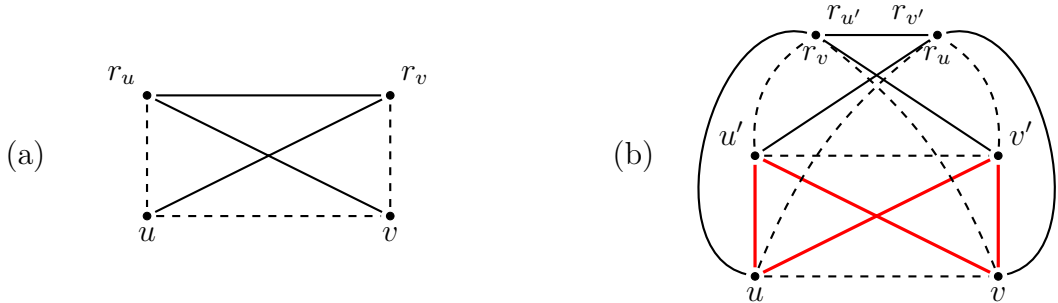
\begin{figure}[ht]
  \centering
    \begin{tikzpicture}[scale = 0.8]
    \begin{scope}[xshift=-10cm,yshift=1cm]
      \node at (-4,0) {(a)};
      \node (u)	[black vertex] at	(-2,-1)	{};
      \node (ru)	[black vertex] at	(-2,1)	{};
      \node (v)	[black vertex] at	(2,-1)	{};
      \node (rv)	[black vertex] at	(2,1)	{};
      \node [anchor=north]		at		(u)		{$u$};
      \node [anchor=south east]	at		(ru)	{$r_u$};
      \node [anchor=north]		at		(v)		{$v$};
      \node [anchor=south west]	at		(rv)	{$r_v$};
      \draw [thick,dashed] (ru) -- (u) -- (v) -- (rv);
      \draw [thick] (v) -- (ru) -- (rv) -- (u);
    \end{scope}
      \node at (-4,1) {(b)};
      \node (u)	[black vertex] at	(-2,-1)	{};
      \node (u')	[black vertex] at	(-2,1)	{};
      \node (v)	[black vertex] at	(2,-1)	{};
      \node (v')	[black vertex] at	(2,1)	{};
      \node (ru)	[black vertex] at	(1,3)	{};
      \node (rv)	[black vertex] at	(-1,3)	{};
      
      \node [anchor=north]		at		(u)		{$u$};
      \node [anchor=south east]	at		(u')	{$u'$};
      \node [anchor=north]		at		(v)		{$v$};
      \node [anchor=south west]	at		(v')	{$v'$};
      \node [anchor=north]		at		(ru)	{$r_u$};
      \node [anchor=north]		at		(rv)	{$r_v$};
      \node [anchor=south east]	at		(ru)	{$r_{v'}$};
      \node [anchor=south west]	at		(rv)	{$r_{u'}$};
  
      \draw [very thick, red] (u) -- (u') (v) -- (v');
      \draw [thick] (ru) to [bend left = 90] (v) (ru) -- (u') (ru) -- (rv);
      \draw [thick] (rv) to [bend right = 90] (u) (rv) -- (v');
      \draw [thick,dashed] (u) -- (v) (u') -- (v');
      \draw [very thick,red] (u) -- (v') (v) -- (u');
      \draw [thick,dashed] (u) to [bend left = 15] (ru) (v) to [bend right = 15] (rv) (u') to [bend left] (rv) (v') to [bend right] (ru);
    \end{tikzpicture}
    \caption{(a) The path \(P_{uv}\) for \(u \in D_2\) and \(v \in D_3\) when \(uv \not\in E'\). Solid (resp.\ dashed) lines illustrate edges (resp.\ missing edges). (b) The induced \(C_4\), in bold red, described in the proof of Theorem~\ref{thm:graphs_with_special_clique_coloring}.
    }
    \label{fig:induced-C4}
\end{figure}

Theorem~\ref{thm:new-main} implies our main result.
For its proof, we need the following theorem due to Gallai~\cite{gallai1963kritische} (see~\cite[Corollary 2]{Stehlik2003}) and, for that, a definition: a graph~$G$ is said 
to be \emph{$k$-critical} if $\chi(G) = k$ 
and $\chi(G-u) < k$, for every $u \in V(G)$.

\begin{theorem}[Gallai, 1963]
\label{thm:critical}
  Every $k$-critical graph whose complement is connected
  has at least \(2k-1\) vertices.
\end{theorem}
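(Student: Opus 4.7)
I would prove the contrapositive: if $G$ is a $k$-critical graph with $n \leq 2k-2$ vertices, then $\overline{G}$ is disconnected. Equivalently, I would show that $V(G)$ can be partitioned into nonempty sets $A$ and $B$ such that every vertex of $A$ is adjacent in $G$ to every vertex of $B$; this is exactly the condition that $\overline{G}$ has $A$ and $B$ in distinct components.

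The plan exploits the interplay between criticality and colorings of vertex-deleted subgraphs. Since $G$ is $k$-critical, for every $v \in V(G)$ the graph $G - v$ admits a proper $(k-1)$-coloring, and in any such coloring $v$ must have neighbors in all $k-1$ color classes (otherwise the coloring would extend to a proper $(k-1)$-coloring of $G$, contradicting $\chi(G)=k$). Since the $n - 1 \leq 2k - 3$ vertices of $G - v$ are distributed among only $k-1$ color classes, a simple averaging argument forces at least $2(k-1) - (n-1) = 2k - n - 1 \geq 1$ color classes to be singletons. This is the source of structural rigidity that I would exploit.

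The central construction is to identify a set $U \subseteq V(G)$ of \emph{singleton vertices}, i.e., vertices that appear alone in their color class across suitably chosen $(k-1)$-colorings of the graphs $G - v$. Using Kempe-chain exchanges between pairs of color classes, I would argue that any two vertices of $U$ must be adjacent in $G$ (otherwise one could swap their colors with those of a second class to merge two singletons, shrinking $U$ and ultimately the number of required colors), and, more importantly, that every vertex of $U$ is adjacent to every vertex of $V(G)\setminus U$. Combined, these two properties yield the join partition $A = U$, $B = V(G) \setminus U$, so $\overline{G}$ is disconnected, a contradiction.

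The main obstacle will be the Kempe-chain step: rigorously establishing the join property requires showing that if some $u \in U$ had a non-neighbor $w \in V(G)\setminus U$, then a careful recoloring along the Kempe component of $w$ in the two-color subgraph determined by $u$'s color and $w$'s color would either produce a $(k-1)$-coloring of $G$ directly, or reduce to a smaller instance to which one can apply induction on $k$. Keeping track of which colorings can be modified while preserving the ``$v$ sees all colors'' condition is the technical heart of the proof, and I expect it to require a careful case analysis of how singleton classes propagate under such swaps; once this structural lemma is in place, the disconnectedness of $\overline{G}$ follows immediately.
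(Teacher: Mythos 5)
The paper does not prove this theorem; it cites Gallai~(1963) and Stehl\'{\i}k~(2003) and uses it as a black box, so there is no in-paper proof to compare against. Evaluating your proposal on its own merits: the setup is sound (pass to the contrapositive, extend a $(k-1)$-coloring of $G-v$ to a $k$-coloring of $G$, and average to conclude that every $k$-coloring has at least $2k-n\ge 2$ singleton classes), and the observation that distinct singleton classes must be pairwise adjacent is correct.

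However, the structural lemma you aim for is false, so the Kempe-chain step cannot be made to work as described. You want a non-empty set $U$ that is simultaneously a clique and joined to $V(G)\setminus U$; these two conditions force every $u\in U$ to be adjacent to every vertex of $V(G)\setminus\{u\}$, i.e.\ $U$ must consist of universal vertices. But there are $k$-critical graphs on exactly $2k-2$ vertices with disconnected complement and \emph{no} universal vertex at all. Take $G$ to be the join of two $5$-cycles, $G=C_5+C_5$: it is $6$-critical on $10=2\cdot 6-2$ vertices, its complement is the disjoint union of two copies of $C_5$ (hence disconnected), yet every vertex has degree $7<9$, so none is universal. The desired conclusion (``$\overline{G}$ is disconnected'') is true there, but it is witnessed by a join partition into two pieces each inducing a $C_5$ --- not by a universal clique --- and no amount of Kempe recoloring will produce a universal vertex in a graph that has none. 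In this example every vertex is a singleton in some $6$-coloring and none is a singleton in all $6$-colorings, so neither natural reading of ``suitably chosen'' gives a workable $U$. To prove the theorem you would need to target the more general join structure directly (e.g.\ exhibit a non-trivial partition of the color classes with no non-edges across it), which the singleton/Kempe machinery as written does not deliver. As it stands, the proposal has a genuine gap at exactly the step you flagged as the technical heart.
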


\begin{theorem}
\label{thm:new-main}
  Let \(G\) be a graph with independence number at most \(2\). If the complement of \(G\) is homomorphic to \(\Gamma_d\) for some \(d\in\mathbb{N}\), 
  then \(G\) contains a strong immersion of~\(K_{\chi(G)}\).
\end{theorem}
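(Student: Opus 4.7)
The plan is to proceed by induction on $n = |V(G)|$, handling the base case $n=1$ and the subcase $\alpha(G) \leq 1$ (where $G$ is complete) trivially; so assume $\alpha(G) = 2$. If $\overline{G}$ is disconnected, write $\overline{G} = \overline{G_1} \sqcup \overline{G_2}$ with both parts nonempty, so that $G$ is the join $G_1 + G_2$ and $\chi(G) = \chi(G_1) + \chi(G_2)$. Each $\overline{G_i}$ is an induced subgraph of $\overline{G}$ (hence homomorphic to $\Gamma_d$) with $\alpha(G_i) \leq 2$, so the induction hypothesis provides strong immersions $J_i$ of $K_{\chi(G_i)}$ in $G_i$. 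Combining them via the length-one paths coming from the complete bipartite structure between $V(G_1)$ and $V(G_2)$ yields a strong immersion of $K_{\chi(G)}$ in $G$. If $\overline{G}$ is connected but $G$ is not $\chi(G)$-critical, pick $v \in V(G)$ with $\chi(G-v) = \chi(G)$ and apply induction to $G-v$, which inherits the hypotheses.

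Assume henceforth that $\overline{G}$ is connected and $G$ is $\chi(G)$-critical. Theorem~\ref{thm:critical} then yields $n \geq 2\chi(G) - 1$. Moreover, if $G$ contained $K_{\chi(G)}$ as a subgraph, then either $G = K_{\chi(G)}$, which is excluded because $\overline{K_{\chi(G)}}$ is disconnected whenever $\chi(G) \geq 2$, or deleting any vertex outside this clique preserves $\chi(G)$, contradicting criticality. Hence $\omega(G) \leq \chi(G) - 1$, and combining with Gallai's bound gives $\omega(G) + \chi(G) \leq 2\chi(G) - 1 \leq n$, that is, $n - \omega(G) \geq \chi(G)$.

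Now let $H$ be a maximal graph on $V(G)$ containing $\overline{G}$ that is homomorphic to $\Gamma_d$, and set $G' := \overline{H} \subseteq G$. Since $H$ is homomorphic to the triangle-free graph $\Gamma_d$, it is itself triangle-free, so $\alpha(G') \leq 2$; together with $\alpha(G') \geq \alpha(G) = 2$, this gives $\alpha(G') = 2$. Choose $I_1$ to be a maximum independent set of $H$, equivalently a maximum clique of $G'$. Lemma~\ref{lemma:andrasfai_blowup_coloring} applied to $H$ then supplies a $3$-coloring $\{I_1, I_2, I_3\}$ of $H$, which is a $3$-clique-coloring of $G'$, such that $G'[I_2 \cup I_3]$ has no induced $C_4$. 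Both hypotheses of Theorem~\ref{thm:graphs_with_special_clique_coloring} are now satisfied by $G'$, and the theorem produces a strong immersion in $G'$, hence in $G$, of a clique with branch vertices $I_2 \cup I_3$, of order $n - \omega(G')$. Using $\omega(G') \leq \omega(G)$ together with the inequality from the previous paragraph, $n - \omega(G') \geq n - \omega(G) \geq \chi(G)$, so discarding extra branch vertices yields the desired strong immersion of $K_{\chi(G)}$.

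The main obstacle I anticipate is that the clique-coloring machinery of Lemma~\ref{lemma:andrasfai_blowup_coloring} and Theorem~\ref{thm:graphs_with_special_clique_coloring} applies most cleanly to the blow-up $G'$, whose chromatic number and maximum clique may differ from those of $G$: the theorem only guarantees an immersion of $K_{n-\omega(G')}$, not of $K_{\chi(G)}$. Bridging this gap relies on the Gallai-type inequality $\omega(G) + \chi(G) \leq n$, which is only available after the preliminary reductions to the case where $G$ is $\chi(G)$-critical and $\overline{G}$ is connected, so that Theorem~\ref{thm:critical} and the impossibility of $G \supseteq K_{\chi(G)}$ can both be invoked.
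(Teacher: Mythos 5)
Your proof is correct and takes essentially the same approach as the paper: reduce to the case where $\overline{G}$ is connected and $G$ is $\chi(G)$-critical, invoke Gallai's theorem to get $n \geq 2\chi(G)-1$, build a maximal supergraph $H$ of $\overline{G}$ homomorphic to $\Gamma_d$ with $G'=\overline{H}$, and apply Lemma~\ref{lemma:andrasfai_blowup_coloring} and Theorem~\ref{thm:graphs_with_special_clique_coloring} to obtain the immersion on $I_2\cup I_3$. The only cosmetic difference is how you bound $|I_1|$: you derive $\omega(G)\leq\chi(G)-1$ from criticality and connectivity of $\overline{G}$ and then use $|I_1|=\omega(G')\leq\omega(G)$, whereas the paper more directly observes that if $|I_1|\geq\chi(G)$ then $G[I_1]$ already gives the desired clique, and otherwise $|I_1|\leq\chi(G)-1$.
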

\begin{proof}
  Let \(G\) be a counterexample that minimizes \(|V(G)|\) 
  and let \(k = \chi(G)\).
  In order to apply Theorem~\ref{thm:critical}, we show that $G$ is $k$-critical
  and \(\oline{G}\) is connected.

  Let us first show that $G$ is $k$-critical.
  Indeed, if $G$ is not $k$-critical, then there is a vertex $u \in V(G)$
  such that $\chi(G-u) = k$.
  Thus $G' = G-u$ has independence number at most~2 and its complement 
  is homomorphic to \(\Gamma_d\).
  Hence, \(G'\) contains
  an immersion of~$K_k$ by the minimality of $G$. But this immersion would also be contained in \(G\), a contradiction.
  Therefore, \(G\) is \(k\)-critical.
  Next, we show that \(\Gcompl\) is connected.
  If \(\Gcompl\) is not connected, then $V(G)$ can be partitioned into 
  two non-empty sets $V_1, V_2$ such that $uv \in E(G)$ whenever $u \in V_1$ and $v \in V_2$.
  Let $k_1$ and $k_2$ be the chromatic numbers of $G[V_1]$ and $G[V_2]$, respectively,
  and note that $k = k_1+k_2$.
  Observe that, for \(i \in \{1,2\}\), the graph
  \(G[V_i]\) has independence number at most~2 and its complement is 
  homomorphic to \(\Gamma_d\).
  Thus, by the minimality of \(G\), \(G[V_i]\)~contains an immersion of \(K_{k_i}\).
  But every vertex in $V_1$ is adjacent in $G$ to every vertex in~$V_2$.
  Hence, $G$ must contain an immersion of
  $K_{k_1 + k_2}=K_{k}$, a contradiction.
  Therefore, \(\Gcompl\) is connected.
  As \(G\) is \(k\)-critical and connected, \(|V(G)| \geq 2k-1\) by Theorem~\ref{thm:critical}.

  Let \(H\) be a maximal supergraph of \(\Gcompl\) that is homomorphic to \(\Gamma_d\), and let \(I_1\) be a maximum independent set in \(H\).
  Clearly \(I_1\) is also an independent set in \(\Gcompl\), and hence induces a clique in \(G\). Thus, if \(|I_1| \geq k\), then \(G[I_1]\) is the desired immersion. So assume that \(|I_1| \leq k-1\). By Lemma~\ref{lemma:andrasfai_blowup_coloring}, graph \(H\) admits a \(3\)-coloring \(\{I_1,I_2,I_3\}\) such that~\(\oline{H}[I_2 \cup I_3]\) has no induced~\(C_4\).
  Let \(G'=\oline{H}\) and note that \(G'\) is a subgraph of \(G\). Thus, the independence number of \(G'\) is at least~\(2\). As \(H\) is a maximal graph homomorphic to~\(\Gamma_d\), by the observation just before Lemma~\ref{lemma:andrasfai_blowup_coloring}, \(H\) is a blowup of \(\Gamma_d\), and thus is triangle-free. Hence \(G'=\oline{H}\) has independence number exactly~\(2\). Therefore, by Theorem~\ref{thm:graphs_with_special_clique_coloring}, \(G'\)~contains an immersion of a clique whose set of branch vertices is precisely \(I_2 \cup I_3\). As~\(G'\) is a subgraph of \(G\), this immersion is also contained in \(G\). Now, recall that \(|V(G)| \geq 2k-1\) and \(|I_1| \leq k-1\). Hence, \(|I_2 \cup I_3| = |V(G)|-|I_1| \geq 2k-1 - (k-1) = k\), and \(G\) contains an immersion of \(K_k\), a contradiction.
\end{proof}

Now, we can prove Theorem~\ref{thm:mainCliqueCovering}.

\begin{proof}[Proof of Theorem~\ref{thm:mainCliqueCovering}]
  Observe that \(\Gcompl\) is triangle-free, with \(\delta(\Gcompl) = (n-1) - \Delta(G) > n/3\)
  and~\(\chi(\Gcompl) \leq 3\).
  Hence, \(\Gcompl\) is homomorphic to \(\Gamma_d\) for some \(d\), 
  by Theorem~\ref{thm:ChenJinKoh}.
  The result follows by Theorem~\ref{thm:new-main}.
\end{proof}

Finally, we can prove Theorem~\ref{thm:mainChi}.

\begin{proof}[Proof of Theorem~\ref{thm:mainChi}]
  Note that \(\Gcompl\) is a triangle-free graph with \(\delta(\Gcompl) = (n-1) - \Delta(G) > 10n/29 > n/3\).
  Thus, \(\chi(\Gcompl) \leq 3\) by Theorem~\ref{thm:jin}.
  Therefore \(G\) has independence number at most \(2\) and clique covering number at most \(3\).
  Since \({\Delta(G) < 19n/29 -1 < 2n/3 - 1}\),
  the result follows by Theorem~\ref{thm:mainCliqueCovering}.
\end{proof}

\section{Immersions of $K_{2\lfloor n/5\rfloor}$}\label{sec:2/5}

In this section, we present a somewhat simpler proof of Theorem~\ref{thm:gauthier}.
A natural step taken by Gauthier, Le, and Wollan~\cite{gauthier2019forcing}
is to prove that a minimal graph with independence number~\(2\) contains 
an induced copy of \(C_5\).
We prove it here for completeness.

\begin{lemma}\label{lemma:induced-C5}
  If \(G\) is a minimal graph with independence number \(2\), 
  then $G$ has an induced copy of \(C_5\).
\end{lemma}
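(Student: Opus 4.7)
I would argue by contradiction. Suppose $G$ is a vertex-minimum counterexample to Theorem~\ref{thm:gauthier} (so $\alpha(G)=2$ and $G$ admits no strong immersion of $K_{2\lfloor n/5\rfloor}$) and that $G$ contains no induced $C_5$. The first move is to pass to the complement: $\Gcompl$ is triangle-free since $\alpha(G)=2$, and $\Gcompl$ also has no induced $C_5$ since $C_5$ is self-complementary. Because the shortest odd cycle of any graph is necessarily induced, $\Gcompl$ must be either bipartite or have odd girth at least~$7$.

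If $\Gcompl$ is bipartite, then $V(G)$ partitions into two cliques, the larger of which has at least $\lceil n/2\rceil\ge 2\lfloor n/5\rfloor$ vertices (for $n$ in the relevant range). Since a clique is its own strong immersion, this would give a strong immersion of $K_{2\lfloor n/5\rfloor}$ in $G$, contradicting the choice of $G$. Hence it suffices to rule out the remaining case, where $\Gcompl$ is triangle-free, $C_5$-free, and not bipartite. My plan there is to exploit minimality of $G$ to derive a lower bound on $\delta(G)$ (analogous to the bounds mentioned in the introduction for related conjectures), equivalently an upper bound on $\Delta(\Gcompl)$, and then invoke an \andrasfai-type theorem in the spirit of Theorem~\ref{thm:jin} applied to~$\Gcompl$. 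This should force $\Gcompl$ to be $3$-colorable, yielding a clique in $G$ of size at least $n/3$; pushing the argument further (using that $\Gcompl$ has no induced $C_5$, not merely that $\chi(\Gcompl)\le 3$) should in fact collapse $\Gcompl$ to being bipartite, returning us to the case already handled.

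The main obstacle I anticipate is this final step: promoting ``triangle-free and $C_5$-free with bounded maximum degree'' to ``bipartite'' requires ruling out complements of odd girth $7,9,11,\ldots$, which needs a careful quantitative use of the \andrasfai-type machinery of Section~\ref{sec:andrasfai} together with the sharpest possible bound on $\delta(G)$ extracted from the minimality hypothesis. It is precisely this interplay between the minimality-based degree bounds and the \andrasfai\ structure theorems that makes the argument delicate and, as noted at the beginning of Section~\ref{sec:2/5}, exposes the loose inequality suggesting Theorem~\ref{thm:gauthier} may not be tight.
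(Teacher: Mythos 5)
Your plan would not go through, because the crucial degree bound runs the wrong way. Minimality of a counterexample to Theorem~\ref{thm:gauthier} does give a degree bound, but it is a \emph{lower} bound on $\delta(G)$: if some vertex $u$ had $|\overline{N(u)}|\ge 2\lfloor n/5\rfloor$, then $\overline{N(u)}$ would already be a clique of the required size, so every vertex of a counterexample satisfies $\delta(G)\ge 3n/5$ --- equivalently $\Delta(\Gcompl)<2n/5$. The \andrasfai{}/Jin/Chen--Jin--Koh theorems you want to invoke all require the opposite inequality: a lower bound on $\delta(\Gcompl)$, i.e.\ an \emph{upper} bound on $\Delta(G)$. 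Nothing in the minimality hypothesis bounds $\Delta(G)$ from above; indeed, such an upper bound is precisely the extra hypothesis that Theorems~\ref{thm:mainChi} and~\ref{thm:mainCliqueCovering} impose and is not available for free here. So the \andrasfai\ step of your plan cannot be launched, and there is no route from ``triangle-free, $C_5$-free, non-bipartite $\Gcompl$'' to a contradiction along these lines. A secondary mismatch: the lemma is a self-contained structural statement about \emph{any} graph that is edge-minimal with $\alpha\le 2$, whereas your argument assumes from the start that $G$ is a minimum counterexample to Theorem~\ref{thm:gauthier}; the latter is how the lemma is eventually used in Section~\ref{sec:2/5}, but it is a different and stronger hypothesis than the one in the lemma's statement.

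The paper's proof is elementary and needs none of this machinery. Since $G$ is not complete, a shortest path between two nonadjacent vertices contains an induced path $\langle v_1,v_2,v_3\rangle$. Edge-minimality applied to the edges $v_1v_2$ and $v_2v_3$ produces a vertex $v_4$ nonadjacent to both $v_1$ and $v_2$, and a vertex $v_5$ nonadjacent to both $v_2$ and $v_3$; then $\alpha(G)=2$ forces $v_3v_4,\ v_1v_5,\ v_4v_5\in E(G)$ and $v_4\neq v_5$, so $\langle v_1,v_2,v_3,v_4,v_5,v_1\rangle$ is the desired induced $C_5$. No complementation, no structure theorem --- just the minimality hypothesis applied twice and the independence-number bound.
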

\begin{proof}
  First, we claim that \(G\) contains an induced path of length \(2\).
  Indeed, since \(G\) is not a complete graph, there is at least a pair of
  nonadjacent vertices \(u\) and \(v\).
  Let \(P\) be a shortest path joining \(u\) and \(v\).
  Note that \(P\) must be an induced path and, since \(u\) and \(v\) are nonadjacent,
  \(P\) contains an induced path of length \(2\) as desired.

  Let \(\langle v_1,v_2,v_3\rangle\) be an induced path of length \(2\) in \(G\).
  Observe that, by the minimality of~\(G\), for any edge $uv\in E(G)$, we have $\alpha(G - {uv})=3$.
  Hence, there is a vertex $v_4$ that is nonadjacent to both $v_1$ and $v_2$;
  and there is a vertex $v_5$ that is nonadjacent to both $v_2$ and $v_3$.
  Since \(v_1\) is nonadjacent to both \(v_3\) and \(v_4\), we have \(v_3v_4\in E(G)\). 
  Hence \(v_4 \neq v_5\).
  Analogously, \(v_1v_5, v_4v_5\in E(G)\), and hence
  \(\langle v_1,v_2,v_3,v_4,v_5,v_1\rangle\) is an induced copy of \(C_5\)
  in \(G\), as desired.
\end{proof}

Now we can present our proof of Theorem~\ref{thm:gauthier}.

\begin{proof}[Alternative proof of Theorem~\ref{thm:gauthier}]
  The proof follows by induction on \(n + |E(G)|\).
  One can easily check that the statement holds for \(n \leq 9\).
  Since we seek an immersion with $2\lfloor n/5\rfloor$ vertices, 
  we may also assume $n=5t$ for some $t\geq2$. So \(n\geq 10\),
  and now we look for an immersion of \(K_{2t}\) in \(G\).

  If \(\alpha(G-e) \leq 2\) for some edge \(e\in E(G)\),
  then, by the induction hypothesis, \(G-e\) (and consequently \(G\)) 
  contains an immersion of \(K_{2t}\), as desired.
  Therefore, we may assume that \(G\) is minimal with \(\alpha(G) \leq 2\).
  Moreover, the minimality of \(G\) implies \(\alpha(G) = 2\),
  and, in particular, \(G\) is not a complete graph.
  By Lemma~\ref{lemma:induced-C5}, there is an induced copy \(C\) of \(C_5\) in~\(G\).
  Let \(C = \langle v_1,v_2,v_3,v_4,v_5,v_1\rangle\), with \(v_i\in V(G)\) for \(i\in [5]\).

  \begin{claim}
    Every vertex in $V(G)\setminus V(C)$ is adjacent to three consecutive vertices in $C$.
  \end{claim}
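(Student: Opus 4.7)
The plan is to fix an arbitrary vertex $u \in V(G) \setminus V(C)$ and analyze the set $S = \{i \in [5] : uv_i \notin E(G)\}$ of indices of non-neighbors of $u$ in $C$. I want to show that $S$ is a (possibly empty) block of at most two cyclically consecutive indices in $[5]$. Once this is established, the complement $[5]\setminus S$ automatically contains three cyclically consecutive indices, which is exactly the desired conclusion.

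The first step is to exploit $\alpha(G) \leq 2$ together with the fact that $C$ is induced. Since $C = \langle v_1,\ldots,v_5,v_1\rangle$ is an induced $C_5$, we have $v_iv_j \in E(G)$ if and only if $i$ and $j$ are at cyclic distance $1$ in $[5]$. For any two distinct indices $i,j \in S$ at cyclic distance $2$, the three vertices $u$, $v_i$, $v_j$ would form an independent set of size $3$ in $G$, contradicting $\alpha(G) \leq 2$. Hence any two distinct indices of $S$ must be cyclically consecutive.

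The second step is to rule out $|S| \geq 3$. Any three indices chosen from $[5]$ must contain a pair at cyclic distance $2$ (there is no triangle in $C_5$), which by the previous step is impossible. Therefore $|S| \leq 2$, and if $|S| = 2$ then the two elements of $S$ are cyclically consecutive.

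To conclude, if $|S| \leq 1$ then $u$ has at least four neighbors in $C$ and in particular three consecutive ones; if $|S| = 2$ with consecutive missing indices, then the remaining three indices form a cyclically consecutive block, so $u$ is adjacent to these three consecutive vertices of $C$. The argument does not invoke minimality beyond what was already used to guarantee the induced $C_5$; the only substantive ingredient is $\alpha(G) \leq 2$ combined with the inducedness of $C$. Accordingly there is no real obstacle, merely a short case analysis on the cyclic structure of $C_5$.
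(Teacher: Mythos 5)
Your proof is correct and rests on exactly the same key observation as the paper's: if $u$ were non-adjacent to two vertices $v_i, v_j$ at cyclic distance $2$ in the induced $C_5$, then $\{u, v_i, v_j\}$ would be an independent triple, contradicting $\alpha(G) \leq 2$. The paper runs this as a direct case analysis after a WLOG assumption that $u$ is non-adjacent to $v_1$, while you phrase it slightly more structurally by characterizing the set $S$ of non-neighbors as a cyclic interval of length at most $2$; the substance is identical.
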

  \begin{proof}
    Let \(u \in V(G)\setminus V(C)\).
    Assume, without loss of generality, that $u$ is nonadjacent to $v_1$. 
    Since $v_1v_3,v_1v_4\not\in E(G)$ and \(\alpha(G) = 2\),
    $u$ is adjacent to $v_3$ and $v_4$. 
    Again, since $v_2v_5\not\in E(G)$ and \(\alpha(G) =2\), 
    \(u\) is either adjacent to \(v_2\) or to \(v_5\),
    as desired.
    \renewcommand{\qedsymbol}{$\blacksquare$}%
  \end{proof}

  By the induction hypothesis, \(G - V(C)\) contains an immersion \(K'\) of \(K_{2t - 2}\).
  Let \(I \subseteq V(G)\setminus V(C)\) be the branch vertices of \(K'\).
  Partition $V(G)\setminus(I\cup V(C))$ into five sets $Z_1, \ldots, Z_5$ such that 
  if $u\in Z_i$ then $v_{i-1},v_i,v_{i+1}\in N(u)$, with \(v_0=v_5\) and \(v_6=v_1\).
  Observe that a vertex \(u \notin V(C)\) may fit in more than one such \(Z_i\).
  If this is the case, it is included in only one such \(Z_i\) chosen arbitrarily. 
  Thus \(|Z_1| + \cdots + |Z_5| = |V(G)\setminus(I\cup V(C))| = n - (2t - 2 + 5) = 3(t-1)\), 
  and hence there is an $i$ for which \(|Z_i|\leq\frac{3}{5}(t-1).\)
  Assume, without loss of generality, that
  \begin{equation}\label{eq:Zi}
    |Z_2|\leq\frac{3}{5}(t-1).
  \end{equation}

  Now, for \(i \in\{1,3\}\), let 
  \[X_i = I\setminus N(v_i) \qquad\text{and}\qquad Y^+_i=N(v_i)\setminus(I\cup V(C)).\]
  Since $v_1v_3\not\in E(G)$ and \(\alpha(G) = 2\), it follows that \(X_1\cap X_3 = \emptyset\) and
  \begin{equation}\label{eq:Y_1+UY_3+}
    Y^+_1\cup Y^+_3=V(G)\setminus(I\cup V(C)).
  \end{equation}
    
  Since $\alpha(G)= 2$, the set $\overline{N(u)} = V(G)\setminus (N(u)\cup\{u\})$ induces
  a clique for every $u\in V(G)$.
  Thus, if \(|\overline{N(u)}| \geq 2t\) for some vertex \(u\in V(G)\),
  then \(\overline{N(u)}\) induces the desired immersion.
  Therefore, we may assume that \(|\overline{N(u)}| \leq 2t-1\)
  for every \(u\in V(G)\).
  This implies that \(|N(u)| = n - 1 - |\overline{N(u)}| \geq 3t\) for every \(u\in V(G)\).
  The next claim is an important step in this proof.
    
  \begin{claim}\label{claim:good-subsets}
     There are disjoint sets $Y_1\subseteq Y^+_1\setminus Z_2$ and $Y_3\subseteq Y^+_3\setminus Z_2$ 
     such that \(|Y_1|=|X_1|\) and \(|Y_3|=|X_3|\).
  \end{claim}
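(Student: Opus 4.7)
The plan is to reduce the claim to three elementary cardinality conditions on $A := Y^+_1 \setminus Z_2$ and $B := Y^+_3 \setminus Z_2$, and then to verify those using the minimum degree bound $|N(u)| \geq 3t$ derived just above, inequality \eqref{eq:Zi}, and identity \eqref{eq:Y_1+UY_3+}.

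The first step is a combinatorial reduction. I would argue that disjoint sets $Y_1 \subseteq A$ and $Y_3 \subseteq B$ with $|Y_1|=|X_1|$ and $|Y_3|=|X_3|$ exist if and only if
\[
|A| \geq |X_1|, \qquad |B| \geq |X_3|, \qquad |A \cup B| \geq |X_1|+|X_3|.
\]
The ``only if'' direction is immediate; for ``if'', one uses an exchange argument: first assign as many elements of $A \setminus B$ as possible to $Y_1$ and of $B \setminus A$ to $Y_3$, then split $A \cap B$ between the two demands. The three inequalities are precisely the feasibility conditions.

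The second step verifies them. Since $|N(v_i)| \geq 3t$ and the neighborhood of $v_i$ splits into the two cycle-neighbors $v_{i-1}, v_{i+1}$, its $|I|-|X_i|=2t-2-|X_i|$ neighbors in $I$, and $Y^+_i$, one obtains $|Y^+_i| \geq t + |X_i|$ for $i \in \{1,3\}$. Combined with \eqref{eq:Zi}, a short computation yields $|A| \geq |X_1| + \tfrac{2t+3}{5} > |X_1|$ and, analogously, $|B| > |X_3|$. For the third inequality, \eqref{eq:Y_1+UY_3+} gives $|Y^+_1 \cup Y^+_3| = n - |I| - 5 = 3(t-1)$, whence $|A \cup B| \geq 3(t-1) - |Z_2| \geq \tfrac{12}{5}(t-1) \geq 2(t-1) \geq |X_1|+|X_3|$, the final step using $X_1 \cap X_3 = \emptyset$ and $X_1 \cup X_3 \subseteq I$.

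The only real obstacle is stating the first-step reduction cleanly; the remaining bookkeeping is routine. The arithmetic makes visible why the pigeonhole bound $|Z_2| \leq \tfrac{3}{5}(t-1)$ is tight here: it is just strong enough that the deletions from $Y^+_1$ and $Y^+_3$ are absorbed by the $+t$ surplus coming from $|N(v_i)| \geq 3t$, which in turn is exactly the surplus afforded by the target clique size $2t$ under the hypothesis $\alpha(G)=2$.
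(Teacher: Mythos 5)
Your proof is correct. The reduction in the first step is exactly right: disjoint $Y_1\subseteq A$ and $Y_3\subseteq B$ with $|Y_1|=a$ and $|Y_3|=b$ exist if and only if $|A|\geq a$, $|B|\geq b$, and $|A\cup B|\geq a+b$ (necessity is immediate; sufficiency follows by the exchange argument you sketch, or by Hall's theorem with a demand side of $a+b$ unit vertices). Your verification of the three conditions uses the same ingredients as the paper --- the bound $|Y^+_i|\geq t+|X_i|$, inequality \eqref{eq:Zi}, and identity \eqref{eq:Y_1+UY_3+} together with $|X_1|+|X_3|\leq|I|=2(t-1)$ --- and the arithmetic checks out. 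The paper proceeds differently in form: it picks $Y_1\subseteq Y^+_1\setminus Z_2$ greedily, prioritizing vertices outside $Y^+_1\cap Y^+_3$, and then argues in two cases ($Y_1\subseteq Y^+_1\setminus Y^+_3$ or $Y^+_1\setminus Y^+_3\subseteq Y_1$), each of which verifies one cardinality inequality corresponding roughly to your second and third conditions. Your version factors out the combinatorics into a clean feasibility lemma, eliminating the case split; the content is the same, but the exposition is a bit tighter and more modular.

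One correction to your closing comment: the bound $|Z_2|\leq\tfrac{3}{5}(t-1)$ is \emph{not} tight for this claim. Your own computation shows the first two conditions only need $|Z_2|\leq t$ and the third only needs $|Z_2|\leq t-1$, so there is genuine slack. The paper makes exactly this observation in the paragraph following the proof and uses the slack to speculate about improvements to Theorem~\ref{thm:gauthier}.
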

  \begin{proof}
    Let \(i\in \{1,3\}\), and note that $|N(v_i)| = |Y^+_i|+|I\setminus X_i|+2$.
    Observe that \(|I\setminus X_i| = |I| - |X_i| = 2(t-1) - |X_i|\), and hence
    \begin{equation}\label{eq:lowerbound:Yi+}
      |Y^+_i|\geq |N(v_i)| - |I\setminus X_i| - 2 
      \geq 3t - 2(t-1) + |X_i| - 2 = 
      t+|X_i|.
    \end{equation}
    
    In particular, by~\eqref{eq:Zi}, we obtain that \(|Y^+_i| \geq |Z_2| + |X_i|\).
    Choose $Y_1\subseteq Y^+_1\setminus Z_2$ with $|Y_1|=|X_1|$, 
    giving priority to vertices not in $Y^+_1\cap Y^+_3$.
    This choice implies that either $Y_1\subseteq Y^+_1\setminus Y^+_3$
    or $Y^+_1\setminus Y^+_3 \subseteq Y_1$.
    If $Y_1\subseteq Y^+_1\setminus Y^+_3$, then by~\eqref{eq:Zi} 
    and~\eqref{eq:lowerbound:Yi+}, since \(t\geq 2\),
    we have $|Y^+_3\setminus Z_2| \geq t + |X_3|-\frac{3}{5}(t-1)\geq |X_3|$, 
    and we can choose \(Y_3\) as desired.
    On the other hand, if $Y^+_1\setminus Y^+_3\subseteq Y_1$, 
    then every element in \(Y_1^+\cup Y_3^+\) that is not in \(Y_1\) must be in~\(Y_3^+\).
    Hence, by~\eqref{eq:Y_1+UY_3+},
    we have \(V(G) \setminus \big(I \cup V(C)\big) = Y_1^+ \cup Y_3^+ = Y_1 \cup (Y_3^+\setminus Y_1)\)
    and, since~\(Y_1 \cap (Y_3^+\setminus Y_1) = \emptyset\), we have that
    \(|Y_3^+ \setminus Y_1| = |V(G) \setminus (I \cup V(C))| - |Y_1| = 3(t-1) - |Y_1|\).
    Note that \(|X_1|+|X_3|\leq |I| = 2(t-1)\) because \(X_1\) and \(X_3\) are disjoint sets in~\(I\). Thus
    \begin{align}
        |Y_3^+\setminus (Y_1\cup Z_2)| 
            & \geq |Y_3^+\setminus Y_1| - |Z_2| \nonumber \\
            & = 3(t-1) - |Y_1| - |Z_2|\nonumber \\
            &  \geq 3(t-1)-|X_1| -\frac{3}{5}(t-1) \nonumber \\ 
            &   >    3(t-1)-2(t-1)+|X_3|-(t-1) 
            \geq |X_3|.\label{eq:lower_bound_Y_3}
    \end{align}
    Therefore, we can choose \(Y_3\) as desired.
    \renewcommand{\qedsymbol}{$\blacksquare$}%
  \end{proof}

  Finally, note that every vertex in $V(G)\setminus\big(I\cup V(C)\cup Z_2\big)$ has
  two neighbors in $\{v_2, v_4, v_5\}$.
  Moreover, every vertex in $X_1\cup X_3$ also has two neighbors in $\{v_2, v_4, v_5\}$.
  Therefore, every pair of vertices $u,w$ with $u\in X_1\cup X_3$ and $w\in Y_1\cup Y_3$ 
  has at least one common neighbor \(v_{uw}\) in \(\{v_2, v_4, v_5\}\).

  Now, let \(X_1 = \{x_1,\ldots,x_{\ell_1}\}\) and \(Y_1 = \{y_1,\ldots,y_{\ell_1}\}\),
  and for each \(i \in \{1,\ldots,\ell_1\}\), consider the path
  \(P_{v_1 x_i} = \langle v_1,y_i,v_{x_iy_i},x_i\rangle\) joining \(v_1\) to \(x_i\).
  Analogously, we define paths~\(P_{v_3 x'}\) joining \(v_3\) to each vertex \(x'\in X_3\).
  It is not hard to check that, since \(X_1 \cap X_3 = Y_1 \cap Y_3 = \emptyset\),
  these paths are edge-disjoint, and we can add \(v_1\) and \(v_3\) to \(K'\),
  while joining~\(v_1\) to \(v_3\) through a path in \(C\).
  This yields an immersion of \(K_{2t}\) in \(G\) whose set of branch vertices is \(I\cup \{v_1,v_3\}\),
  as desired (see Figure~\ref{fig:inductionfor2n/5}).
\end{proof}

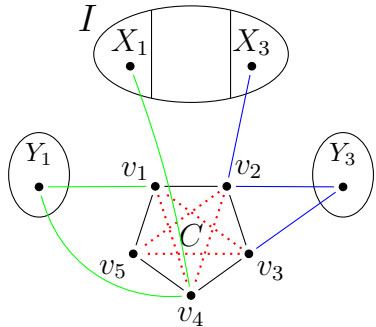
\begin{figure}[H]
  \centering
  \begin{tikzpicture}[scale = 0.8]
\node[black vertex, label={[label distance=-5pt]south east:$v_3$}] (c1) at (-18:1) {};
\node[black vertex, label={[label distance=-5pt]north east:$v_2$}] (c2) at (54:1) {};
\node[black vertex, label={[label distance=-6pt]north west:$v_1$}] (c3) at (126:1) {};
\node[black vertex, label={[label distance=-6pt]south west:$v_5$}] (c4) at (198:1) {};
\node[black vertex, label={[label distance=-3pt]south:$v_4$}] (c5) at (-90:1) {};
\node (C) at (0,0) {$C$};

\draw (c1) -- (c2);
\draw (c1) -- (c3) [thick, red, dotted];
\draw (c1) -- (c4) [thick, red, dotted];
\draw (c1) -- (c5);
\draw (c2) -- (c3);
\draw (c2) -- (c4) [thick, red, dotted];
\draw (c2) -- (c5) [thick, red, dotted];
\draw (c3) -- (c4);
\draw (c3) -- (c5) [thick, red, dotted];
\draw (c4) -- (c5);

\draw[draw] (0, 3) ellipse (1.6 and 0.8);
\draw (-0.65,2.27) -- (-0.65,3.73);
\draw (0.65,2.27) -- (0.65,3.73);
\node at (-1, 3.2) {$X_1$};
\node[black vertex] (x1) at (-1, 2.8) {};
\node at (-1.7, 3.6) [font=\large] {$I$}; 
\node at (1, 3.2) {$X_3$}; 
\node[black vertex] (x3) at (1, 2.8) {};

\draw[draw] (2.5, 1) ellipse (0.5 and 0.7);
\node at (2.5, 1.35) [font=\footnotesize] {$Y_3$};
\node[black vertex] (y1) at (2.5, 0.8) {};

\draw[draw] (-2.5, 1) ellipse (0.5 and 0.7);
\node at (-2.5,1.35) [font=\footnotesize] {$Y_1$}; 
\node[black vertex] (y3) at (-2.5,0.8) {};

\draw (c1) -- (y1) [blue];
\draw (c2) -- (y1) [blue];
\draw (c2) -- (x3) [blue];

\draw (c3) -- (y3) [green];
\draw (c5) to [bend left = 40] (y3) [green];
\draw (c5) to [bend right = 5] (x1) [green];

\end{tikzpicture}
\caption{Paths from $v_1$ to $X_1$ (green), and from $v_3$ to $X_3$ (blue) in the proof of Theorem~\ref{thm:gauthier}. Dotted red lines illustrate missing edges.}
\label{fig:inductionfor2n/5}
\end{figure}

Observe that, while~\eqref{eq:Zi} is used twice in the proof of Claim~\ref{claim:good-subsets},
in both cases we only needed \(|Z_2| \leq t\) to find the desired sets \(Y_1\) and \(Y_3\).
This gap means we could obtain larger sets \(Y_1\) and \(Y_3\), allowing a local improvement as follows.
Suppose we replace the induction hypothesis of an immersion of \(K_{2t}\) by an immersion of \(K_{2t + t/5}\).
The local variables are changed as follows.
The size of \(I\) increases to \(2(t-1) + (t-1)/5\). The minimum degree guarantees that \(|N(v_i)|\) decreases by at most \(t/5\), so \(|N(v_i)| \geq 3t - t/5\). 
Thus~\eqref{eq:lowerbound:Yi+} becomes \(|Y_i^+| \geq 3t/5 + 1/5 + |X_i|\).
In a similar manner, we get \(|X_1| + |X_3| \leq 2(t-1) + (t-1)/5\) and \(|V(G)\setminus (I\cup V(C)))| \geq 3(t-1) - (t-1)/5\),
and then~\eqref{eq:lower_bound_Y_3} becomes \(|Y_3^+ \setminus (Y_1 \cup Z_2)| \geq |X_3|\).
Unfortunately, to conclude the induction step, one should add strictly more than two vertices to the immersion.

\section{Concluding remarks}\label{sec:conclusion}

In this paper, we explore Conjecture~\ref{conj:vergaraChi}
under a maximum degree constraint that allows us to use structural results 
on triangle-free graphs.
These results reveal a connection to the chromatic
number of the complement of these graphs.
A natural possible improvement on our result is 
to weaken the condition on \(\Delta(G)\) in
Theorem~\ref{thm:mainChi} to \(\Delta(G) < 2n/3 - 1\).
This could be approached with an extension of Theorem~\ref{thm:ChenJinKoh},
given by Brandt and Thomassé~\cite{brandt2011dense}, 
that says that triangle-free graphs on~\(n\) vertices with minimum degree greater than \(n/3\) are homomorphic to a Vega graph.
Vega graphs are \(4\)-colorable, and hence have an independent set of size \(n/4\) and their structure can be used to extend Lemmas~\ref{lemma:andrasfai_blowup_coloring} and Theorem~\ref{thm:graphs_with_special_clique_coloring}.

Another possible strategy for improvement is to extend
Theorem~\ref{thm:graphs_with_special_clique_coloring} to graphs with fractional clique number less than \(3\), which would reveal a connection with~\cite{blasiak2007special}.
Indeed, let~\(G\) be a graph for which \(\alpha(G) \leq 2\) and \(\Delta(G) < 2n/3 - 1\).
We claim that \(G\) has fractional clique covering number less than \(3\).
Since \(\Gcompl\) has minimum degree \(\oline{\delta} > n/3\),
the family of neighborhoods \(\{\oline{N}(u) : u \in V(\Gcompl)\}\) 
with constant weight function \(1/\oline{\delta}\) is a fractional coloring of \(\Gcompl\),
and hence \(\Gcompl\) has fractional chromatic number at most \(n/\oline{\delta} < 3\).

\section*{Acknowledgements}
This research has been partially supported by Coordena\c cão de Aperfei\c coamento
de Pessoal de N\'\i vel Superior (CAPES), Brazil, Finance Code 001.
F.~Botler is partially supported by CNPq (304315/2022-2) and CAPES (88881.973147/2024-01).
C.~G.~Fernandes is partially supported by CNPq (310979/2020-0 and 404315/2023-2).
C.~N.~Lintzmayer is partially supported by CNPq (312026/2021-8 and 404315/2023-2) and by L'ORÉAL-UNESCO-ABC For Women In Science.
S. Mishra is supported by Fondecyt Postdoctoral grant $3220618$ of Agencia National de Investigati\'{o}n y Desarrollo (ANID), Chile.
R.~A.~Lopes is supported by CAPES (88887.843699/2023-00).
B.~L.~Netto is supported by CAPES (88887.670803/2022-00).
M.~Sambinelli is partially supported by CNPq (407970/2023-1).
CNPq is the National Council for Scientific and Technological Development of Brazil. 

\bigskip
\noindent
\textbf{E-mail addresses:} \\ \texttt{\{fbotler, cris\}@ime.usp.br} (F.\ Botler, C.\ G.\ Fernandes), \\
\texttt{\{carla.negri, m.sambinelli\}@ufabc.edu.br} (C.\ N.\ Lintzmayer, M.\ Sambinelli),\\
\texttt{\{rui, brunoln\}@cos.ufrj.br} (R.\ A.\ Lopes, B.\ L.\ Netto),\\
\texttt{suchismitamishra6@gmail.com} (S.\ Mishra).

\end{document}